\newcommand \R {\mathbb{R}}
\newcommand \C {\mathbb{C}}
\newcommand \HH {\mathbb{H}}
\newcommand \BB {\mathbb{B}}
\newcommand \RR {\mathbb{R}}
\newcommand \Oh {\mathcal{O}}
\newcommand \la {\langle}
\newcommand \ra {\rangle}
\newcommand \D {\partial}
\newcommand \eps {\varepsilon}
\newcommand \Def {\stackrel{\textrm{def}}=}
\newcommand\comp{{\mathrm{comp}}}
\newcommand\Diag{{\mathrm{Diag}}}
\newcommand\CI {C^\infty}
\newcommand\even{{\mathrm{even}}}
\newcommand\pa{\partial}
\newcommand\cR{\mathcal{R}}
\newcommand\Vf{\mathcal{V}}
\newcommand\olX{\overline X}
\DeclareMathOperator \re {Re}
\DeclareMathOperator \im {Im}
\DeclareMathOperator \supp {supp}
\DeclareMathOperator \WFh {WF_{\textit{h}}}
\DeclareMathOperator \Op {Op}
\DeclareMathOperator \Id {Id}
\DeclareMathOperator \Diff {Diff}
\newtheorem{lem}{Lemma}
\newtheorem{thm}{Theorem}
\newtheorem{cor}{Corollary}
\theoremstyle{definition}
\newtheorem{rem}{Remark}
\newtheorem{Defn}{Definition}
\numberwithin{equation}{section}
\numberwithin{lem}{section}
\numberwithin{cor}{section}
\numberwithin{rem}{section}
\numberwithin{Defn}{section}
\numberwithin{thm}{section}
\title
[Gluing semiclassical resolvent estimates]
{Gluing semiclassical resolvent estimates via propagation of singularities}
\author[Kiril Datchev]
{Kiril Datchev}
\address{Department of Mathematics, Massachusetts Institute of Technology, Cambridge, MA
02139-4397, U.S.A.}
\email{datchev@math.mit.edu}
\author[Andr\'as Vasy]
{Andr\'as Vasy}
\address{Department of Mathematics, Stanford University, Stanford, CA
94305-2125, U.S.A.}
\email{andras@math.stanford.edu}
\keywords{Resolvent estimates, gluing, propagation
of singularities}
\subjclass[2010]{58J47, 35L05}
\thanks{The first author is partially supported by a National Science Foundation postdoctoral fellowship, and the second author is
partially supported by
the National Science Foundation under
grant DMS-0801226, and
a Chambers Fellowship from Stanford University.}
\date{November 19, 2011}
\begin{document}

\begin{abstract}
We use semiclassical propagation of singularities to give a
general method for gluing together resolvent estimates. As an
application we prove estimates for the analytic continuation of the
resolvent of a Schr\"odinger operator for certain asymptotically
hyperbolic manifolds in the presence of trapping which is sufficiently
mild in one of several senses. As a corollary we obtain local
exponential decay for the wave propagator and local smoothing for the
Schr\"odinger propagator.
\end{abstract}
\maketitle

\section{Introduction}

In this paper we give a general method for gluing semiclassical resolvent estimates. As an application we obtain the following theorem.
\begin{thm}\label{t:intro}
Let $(X,g)$ be an even asymptotically hyperbolic Riemannian manifold. Let
\[P = h^2 \Delta_g -1, \qquad 0 < h \le h_0.\]
 Suppose that the trapped set of $X$, i.e. the set of maximally extended geodesics which are precompact, is either normally hyperbolic in the sense of \S \ref{s:wz} or hyperbolic with negative topological pressure at $1/2$ (see \S \ref{s:nz}). Then the cutoff resolvent $\chi(P- \lambda)^{-1} \chi$ continues analytically from $\{\im \lambda > 0\}$ to $[-E,E]-i[0,\Gamma h]$ for every $E \in (0,1)$ and $\chi \in C_0^\infty(X)$, where it obeys the bound
 \[\|\chi(P- \lambda)^{-1} \chi\|_{L^2 \to L^2} \le a(h).\]
Here $h^{-1} \lesssim a(h) \lesssim h^{-N}$ and $\Gamma>0$ are both determined only by the trapped set. Moreover, for $E' \in [-E,E]$ we have for some $C>0$ the following quantitative limiting absorption principle:
 \[\|\chi(P- E' - i0)^{-1} \chi\|_{L^2 \to L^2} \le C \log(1/h)h^{-1}.\]
\end{thm}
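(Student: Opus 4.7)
The statement combines dynamical information about the trapped set with infinity, so the natural strategy is a two‑region decomposition followed by a gluing argument, exactly in the spirit of the paper's title. Fix a relatively compact open neighborhood $U$ of the trapped set in $X$ and a slightly larger neighborhood $V$, with $\chi$ supported in $U$ and another cutoff $\wt\chi$ equal to $1$ on $V$. The plan is to build a parametrix for $(P-\lambda)^{-1}$ by combining (i) a trapping estimate on $V$ supplied by the hypothesis and (ii) a non‑trapping resolvent on $X\setminus\overline U$, which is an asymptotically hyperbolic end.

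\textbf{Step 1: ingredients.} On the compact region $V$, in the normally hyperbolic case I would invoke the Wunsch--Zworski estimate from \S\ref{s:wz}, which yields a bound $\|\wt\chi (P_{\mathrm{model}}-\lambda)^{-1}\wt\chi\|\lesssim \log(1/h)/h$ in a strip $|\im\lambda|\le \Gamma h$ for a model operator with complex absorbing potential outside $V$. In the hyperbolic/negative‑pressure case I would instead use the Nonnenmacher--Zworski estimate of \S\ref{s:nz}, giving $\lesssim h^{-N}$ in such a strip. On the non‑trapping end $X\setminus\overline U$, the analytic continuation and $O(1/h)$ estimate for the resolvent of (the relevant reference operator) is provided by Mazzeo--Melrose/Vasy‑type theory for even asymptotically hyperbolic manifolds; this gives a second local resolvent, holomorphic in a strip.

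\textbf{Step 2: gluing via propagation of singularities.} The core of the argument is to patch these two resolvents into a true approximate inverse for $P-\lambda$. Using cutoffs subordinate to the decomposition, I would form a parametrix of the form $Q(\lambda)=\chi_1 R_{\mathrm{trap}}(\lambda)\chi_2 + \chi_1' R_{\mathrm{free}}(\lambda)\chi_2'$ and compute $(P-\lambda)Q(\lambda) = \Id + K(\lambda)$, where $K(\lambda)$ arises from commutators $[P,\chi_j]$ and is microsupported on the overlap regions. The key semiclassical point is that these overlap regions lie in a non‑trapping part of phase space, so semiclassical propagation of singularities (in the usual real‑principal‑type sense, together with its radial‑point/boundary version near the asymptotically hyperbolic end) turns the $L^2$ mapping properties of $K(\lambda)$ into an estimate $\|K(\lambda)\|\le C h \cdot a(h)$, with $a(h)$ being the larger of the two input bounds. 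Choosing $\Gamma$ small enough so that $Ch\cdot a(h)<1/2$ in the strip $|\im\lambda|\le\Gamma h$, one inverts $\Id+K$ by Neumann series and localizes with $\chi$ on each side to obtain the claimed bound $\|\chi(P-\lambda)^{-1}\chi\|\lesssim a(h)$, with analytic continuation from $\{\im\lambda>0\}$ inherited from each ingredient.

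\textbf{Step 3: limiting absorption at real energies.} For the last inequality, the bound in the full strip is $h^{-N}$ in the Nonnenmacher--Zworski case, which is insufficient. However, on the real axis itself, both hypotheses deliver the sharper $\log(1/h)/h$ cutoff resolvent bound for the model: for normally hyperbolic trapping this is Wunsch--Zworski, and for hyperbolic trapping with negative topological pressure at $1/2$ this is the original Nonnenmacher--Zworski estimate restricted to $\lambda\in[-E,E]$. Running the same gluing argument at $\lambda=E'-i0$ with the $\log(1/h)/h$ ingredient in place of $h^{-N}$ yields the quantitative limiting absorption principle.

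\textbf{Main obstacle.} The routine part is collecting the two microlocal estimates; the delicate part is verifying that the commutator error $K(\lambda)$ is truly microsupported in a non‑trapping region and that the propagation estimate applies uniformly down to $\im\lambda=-\Gamma h$, including at the radial points of the asymptotically hyperbolic end where one must use the evenness hypothesis on $g$ to pass from the $0$‑calculus to a semiclassical setting in which propagation works. That uniform semiclassical microlocal bookkeeping in a strip is what I expect to consume most of the work.
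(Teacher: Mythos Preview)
Your overall architecture matches the paper's: two model resolvents (trapping model with complex absorption or Euclidean end; non-trapping asymptotically hyperbolic end) glued via a cutoff parametrix, with propagation of singularities controlling the errors. The ingredients in Step~1 and the real-axis refinement in Step~3 are exactly what the paper uses.

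However, Step~2 has a genuine gap. With a single-layer parametrix $Q=\chi_1 R_{\mathrm{trap}}\chi_2+\chi_1' R_{\mathrm{free}}\chi_2'$ and error $K=[P,\chi_1]R_{\mathrm{trap}}\chi_2+[P,\chi_1']R_{\mathrm{free}}\chi_2'$, the best generic bound is indeed $\|K\|\lesssim h\,a(h)$, but since $a(h)\ge h^{-1}$ this gives $h\,a(h)\ge 1$, so the Neumann series does \emph{not} converge and no choice of $\Gamma$ fixes this. Propagation of singularities does not help at this stage: for a general input $f$, the wavefront set of $R_j f$ on $T^*\supp d\chi$ is not small, because backward bicharacteristics from $\supp d\chi$ can hit $\supp f$ (and, on the trapping side, the trapped set).

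The paper's remedy is the point you are missing. One uses \emph{shifted} cutoffs so that the two commutator errors $A_0,A_1$ have disjoint supports from the corresponding input cutoffs, forcing $A_0^2=A_1^2=0$. Then one invokes the bicharacteristic convexity of the gluing annulus (no trajectory can go from $\{x>2\}$ to $\{x<2\}$ and back) together with the semiclassically outgoing property of each model resolvent to show that the cross term $A_0A_1$ is $\Oh(h^\infty)$. A three-fold iteration $F-FA_0-FA_1+FA_1A_0$ then leaves only $-A_0A_1+A_0A_1A_0=\Oh(h^\infty)$, and $\Id+\Oh(h^\infty)$ is invertible. This yields $\|\rho R(\lambda)\rho\|\le C h^2 a_0^2 a_1$, so with $a_0\sim h^{-1}$ the glued bound matches the trapping bound $a_1$. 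Your proposal needs this nilpotency-plus-convexity iteration in place of the Neumann series; the ``main obstacle'' you flag (radial-point bookkeeping at the hyperbolic end) is in fact taken as black-box input from \cite{v1,v2}, whereas the genuine work is making the commutator error not merely $\Oh(1)$ but $\Oh(h^\infty)$.
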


By an \textit{even asymptotically hyperbolic manifold} we mean that $X$ is the interior of $\overline X$, a compact manifold with boundary, and
\[
g = \frac{dx^2 + \tilde g}{x^2}, \textrm{ near }\D \overline{X}.
\]
Here $x \in C^\infty(\overline{X})$ is a boundary defining function and $\tilde g$ is a family of metrics on $\D\overline{X}$, smooth up to $\D\overline{X}$,\footnote{We can reduce a more general case to this one. Namely, it suffices to assume that $\tilde g$ is a 2-cotensor which is a metric on $\D\overline{X}$ when restricted to $\D \overline{X}$: see \cite[Proposition 2.1]{js}.} and even in $x$ (see \cite[Definition 1.2]{g} for a more invariant way to phrase this last condition). The assumptions on the trapped set will be discussed in more detail in \S \ref{s:trapped}, but for now we remark that it suffices to take $X$ negatively curved with a trapped set consisting of a single closed geodesic.  Moreover, the compact part of the manifold on which the trapping occurs can be replaced by a domain with several convex obstacles. See \S\ref{s:apps} for a stronger result.

As already stated our methods work in much greater generality. Let $(X,g)$ be a complete Riemannian manifold, $P=h^2\Delta_g+V-1$
a semiclassical Schr\"odinger operator, $V\in\CI(X;\RR)$ bounded, $h\in(0,1)$. Then
$P$ is essentially self-adjoint, 
$R(\lambda)=(P-\lambda)^{-1}$ is holomorphic in $\{\lambda:\ \im\lambda\neq 0\}$.
Moreover, in this set
one has uniform estimates on $R(\lambda):L^2\to L^2$ as $h\to 0$, namely $\|R(\lambda)\| \le 1/|\im \lambda|$.

On the other hand,
as $\lambda$ approaches the spectrum, $R(\lambda)$ is necessarily not uniformly
bounded (even for a single $h$). However, in many settings, e.g. on asymptotically
Euclidean or hyperbolic spaces with a suitable decay assumption on $V$, the resolvent extends continuously to the
spectrum (perhaps away from some thresholds), although only as an operator on weighted
$L^2$-spaces. Indeed under more restrictive assumptions it continues
meromorphically across the continuous spectrum, typically to a Riemann surface
ramified at thresholds: see e.g. \cite{mel} for a general discussion.

It is very useful in this setting to obtain semiclassical
resolvent estimates, i.e.\ estimates as $h\to 0$, both at the spectrum of $P$ and for
the analytic continuation of the resolvent, $R(\lambda)$. By scaling, these imply high energy resolvent
estimates for non-semiclassical Schr\"odinger operators, which in turn can
be used, for instance, to describe wave propagation, or more precisely the
decay of solutions of the wave equation: see \S\ref{s:apps} for examples of such applications in our setting. For this purpose the most relevant
estimates are those in a strip near the real axis for the non-semiclassical problem
(which gives exponential decay rates for the wave equation),
which translates to estimates in an $\Oh(h)$ neighborhood of the real axis for
semiclassical problems.

The best estimates one can expect (on appropriate
weighted spaces) are $\Oh(h^{-1})$; this corresponds to the fact that this problem is
semiclassically of real principal type. However, if there is trapping, i.e.\ some trajectories of
the Hamilton flow (or, in case $V=0$, geodesics) do not escape to infinity, the
estimates can be significantly worse (exponentially large: see \cite{Burq:Lower}) even at the real axis.
Nonetheless, if the trapping is mild, e.g.\ the trapped set is hyperbolic, then
one has polynomial, $\Oh(h^{-N})$, bounds in certain settings, see the work of
Nonnenmacher-Zworski \cite{Nonnenmacher-Zworski:Quantum, nz2}, Petkov-Stoyanov \cite{ps}, and Wunsch-Zworski \cite{wz}. Moreover, on the real axis (i.e. for $R(\lambda + i0)$ with $\lambda \in \R$) \cite{Nonnenmacher-Zworski:Quantum, wz} prove $\Oh(\log(1/h)h^{-1})$ bounds, which work of Bony-Burq-Ramond \cite{Bony-Burq-Ramond} shows to be optimal when any trapping is present.

That $\Oh(h^{-1})$ bounds hold in strips for nontrapping asymptotically hyperbolic manifolds was proved by the second author in \cite{v1} (see also \cite{v2} and \cite{Melrose-SaBarreto-Vasy:Semiclassical}). This result, along with the accompanying propagation of singularities theorem, is what makes it possible to use our gluing method to prove Theorem \ref{t:intro}; see \S\ref{s:ahinfinity} for a discussion of how to. 

Typically, for the settings in which one can prove polynomial bounds in the presence
of trapping, one considers particularly convenient models in which one alters
the problem away from the trapped set, e.g.\ by adding a complex absorbing
potential. The natural expectation is that if one can prove such bounds in a thus
altered setting, one should also have the bounds if one alters the operator in
a different non-trapping manner, e.g.\ by gluing in a Euclidean end or another
non-trapping infinity. In spite of this widespread belief, no general
prior results exist in this direction, though in some special cases this has been
proved using partially microlocal techniques, e.g.\ in work of Christianson \cite{c07,c08} on resolvent estimates where the trapping consists of a single hyperbolic orbit, and in the work of the first author
\cite{d}, combining the estimates of Nonnenmacher-Zworski \cite{Nonnenmacher-Zworski:Quantum, nz2} with the microlocal non-trapping
asymptotically Euclidean estimates of the second author and
Zworski \cite{vz} as well as the more delicate non-microlocal estimates
of Burq \cite{Burq:Lower} and Cardoso-Vodev \cite{Cardoso-Vodev:Uniform}. Another example is work of the first author \cite{d2} using an adaptation of the method of complex scaling to glue in another class of asymptotically hyperbolic ends to the estimates of Sj\"ostrand-Zworski \cite{sz} and \cite{Nonnenmacher-Zworski:Quantum,nz2}.
It is important to point out, however, that in the present paper we glue the resolvent estimates {\em directly}, without the need for any information on how they were obtained, so for instance we do not need to construct a global escape function, etc. In addition to the above listed
references, Bruneau-Petkov \cite{bp} give a general method for deducing weighted resolvent estimates from cutoff resolvent estimates, but they require that the operators in the cutoff estimate and the weighted estimate be the same.

In this paper we show how one can achieve this gluing in general, in a robust
manner. The key point is the following.
One cannot simply use a partition of unity to combine the trapping model with
a new `infinity' because the problem is not semiclassically elliptic.
Thus, semiclassical singularities (i.e. lack of decay as $h\to 0$) propagate
along null bicharacteristics. However, under a convexity assumption on the gluing
region, which holds for instance when gluing in asymptotically Euclidean or
hyperbolic models, following these singularities microlocally allows us to show
that an appropriate three-fold iteration of this construction, which
takes into account the bicharacteristic flow, gives a parametrix with $\Oh(h^\infty)$
errors. This in turn allows us to show that the resolvent of the glued operator satisfies
a polynomial estimate, and that on asymptotically Euclidean and hyperbolic manifolds the order of growth is given by that of the model operator for the trapped region.

We state our general assumptions and main result precisely in the next section, and prove the result in \S \ref{s:proof}. In \S \ref{s:infinity} we will show how our assumptions near infinity are satisfied for various asymptotically Euclidean and asymptotically hyperbolic manifolds. In \S\ref{s:trapped} we show how our assumptions near the trapped set are satisfied for various types of hyperbolic trapping. In \S \ref{s:apps} we give applications: a more precise version of Theorem \ref{t:intro}, exponential decay for the wave equation, and local smoothing for the Schr\"odinger propagator.

We are very grateful to Maciej Zworski for his suggestion, which started this project, that resolvent gluing should be understood much better, and for his interest in this project.
Thanks also to both anonymous referees for their useful comments.

\section{Main theorem}\label{s:abstract}

Before stating the abstract assumptions of our main theorem, we review some terminology from semiclassical analysis.
For $a \in C^\infty(T^*X)$ a symbol supported in a coordinate patch, $\psi\in\CI(X)$
compactly supported in the patch, $\psi\equiv 1$
on a neighborhood of the projection to $X$ of the support of $a$,
$\Op(a)$ is a semiclassical quantization given in local coordinates by
\[\Op(a)u(z) = \frac 1 {(2\pi h)^n} \int e^{iz\zeta/h}a(z,\zeta)\widehat {(\psi u)}(\zeta)d\zeta.\]
See, for example, \cite{ds,ez} for more information. We also say
that a family of functions $u=(u_h)_{h\in (0,1)}$ on $X$
is polynomially bounded if $\|u\|_{L^2}\lesssim h^{-N}$ for some $N$.
The semiclassical wave front set, $\WFh(u)$, is defined for polynomially bounded
$u$ as follows: for $q\in T^*X$, $q\notin\WFh(u)$ iff there exists
$a\in\CI_0(T^*X)$ with $a(q) \ne 0$ such that $\Op(a)u=\Oh(h^\infty)$ (in $L^2$). One can also
extend the definition to $q\in S^*X$ (thought of as the cosphere bundle at
fiber-infinity in $T^*X$) by considering $a \in C_0^\infty(\overline{T}^*X)$, where $\overline{T}^*X$ is the fiber-radial compactification of $T^*X$ with $a(q) \ne 0$ for $q \in S^*X$;
then $\WFh(u)=\emptyset$ implies $u=\Oh(h^\infty)$ (in $L^2$).

Let $\overline X$ be a compact manifold with boundary, $g$ a complete metric on $X$ (the interior of $\overline X$), and $P = h^2 \Delta_g + V$ a semiclassical Schr\"odinger operator on $X$ with $V \in C^\infty(X;\R)$. We have no further explicit conditions on the potential $V$, although the dynamical assumption \eqref{eq:convexity} and the assumptions on the resolvents below will in practice restrict the potentials the theorem applies to. Let $x$ be a boundary defining function, and let
\[X_0 \Def \{0 < x < 4\}, \qquad X_1 \Def \{x > 1\}.\]
Recall that a bicharacteristic of $P$ is an integral curve in $T^*X$ of the Hamiltonian vector field associated to the Hamiltonian function $p = |\xi|_g^2 + V(x)$, and that the energy of a bicharacteristic is the level set of $p$ in which it lies.
Suppose that the bicharacteristics $\gamma$ of $P$ (by this we always mean bicharacteristics at energy in some fixed range $[-E,E]$) satisfy the convexity
assumption
\begin{equation}\label{eq:convexity}
\dot x(\gamma(t))=0\Rightarrow \ddot x(\gamma(t))<0,
\end{equation}
in $X_0$. In particular this rules out the possibility of any trapped trajectory entering $X_0$. There may be more complicated behavior, including trapping, in $X \setminus X_0$.

\begin{rem}\label{rem:constants}
If $x$ is a boundary defining function, $f$ is a $\CI$
function on $[0,\infty)$ with $f'>0$,
and $x$ satisfies \eqref{eq:convexity} then so does
$f\circ x$.
In particular the specific constants above
(as well as intermediate constants used below) are chosen only for convenience,
and can be replaced by arbitrary constants so long as the ordering of the constants is preserved.
\end{rem}

Let $P_0$ and $P_1$ be model differential operators on $X_0$ and $X_1$ respectively: 
\begin{equation}\label{e:modelagree}
P|_{X_0} = P_0|_{X_0}, \textrm{ and } P|_{X_1} = P_1|_{X_1}.
\end{equation}
The spaces $X'_j$ on which the operators $P_j$ are globally defined can differ from $X$ away from $X_j$, as the operators will always be multiplied by a smooth cutoff function to the appropriate $X_j$. Assume however that no bicharacteristic of $P_1$ leaves $X_1$ and then returns later, i.e. that
\begin{equation}\label{eq:X_1-convex}
X_1\ \text{is bicharacteristically convex in}\  X_1'.
\end{equation}
Note that we do not assume that the $P_j$ are self-adjoint; this is useful in the applications in \S\S\ref{s:trapped}--\ref{s:apps}. However, the condition \eqref{e:modelagree} makes $P_j$ formally self-adjoint on $X_j$ and in particular has real semiclassical symbol on $T^*X_j$, as a result of which bicharacteristics of $P_j$ are well defined on $T^*X_j$. Note that one difference from the (in some ways related) ``black-box'' approach of Sj\"ostrand-Zworski \cite{szcomp} is that for us $X_1'$ will typically not be a compact manifold.

\begin{rem}
In some applications we may wish to divide $T^*X$, rather than simply $X$, into two overlapping regions with a different model operator on each one. To do this it is enough to take $x$ to be a more general function in $C^\infty(T^*X)$, rather than a boundary defining function for $\overline{X}$ as we do here. In that case $\Oh(h^\infty)$ error terms appear in the formulas in \eqref{e:modelagree}, and in several other formulas below, but the construction is essentially the same. For simplicity of exposition we do not pursue this level of generality further below.
\end{rem}

Let $\rho_0 \in C^\infty(X_0')$, $\rho_1 \in C^\infty(X_1')$ be bounded functions, referred to as \textit{weights} (in typical applications they may be compactly supported, decaying at infinity, or constant) such that $\rho_0 = 1$ on $X_0 \cap X_1$ and $\rho_1 = 1$ on $X_1$. Let $\rho \in C^\infty(X)$ have $\rho = \rho_0$ on $X_0$ and $\rho = 1$ otherwise.

Let $\rho R(\lambda)\rho $ denote the weighted resolvent $\rho (P-\lambda)^{-1} \rho $ in $\{\im \lambda > 0\}$ or its meromorphic continuation where this exists in $\{\im \lambda \le 0\}$, and similarly $\rho_0 R_0(\lambda) \rho_0 $ and $\rho_1 R_1(\lambda) \rho_1$ where those weighted resolvents exist. Suppose that $\tilde h_0\in (0,1)$ and
the $\rho_j R_j(\lambda)\rho_j$ continue from $\{\im \lambda > 0\}$ to  
\[\lambda \in D \subset [-E,E]-i[0,\Gamma h], \quad 0<h<\tilde h_0,\]
(note that $D$ need not be open, and may in particular consist of $[-E,E]+i0$),
and in that region they obey
\begin{equation}\label{eq:model-bound}
\|\rho_j R_j(\lambda)\rho_j\| \le a_j(h,\lambda) \lesssim h^{-N},\ 0<h<\tilde h_0,
\end{equation}
for some $a_j(h) \ge h^{-1}$. We call a resolvent satisfying \eqref{eq:model-bound} {\em polynomially bounded.}

\begin{Defn}\label{Def:outgoing}
Let $\lambda \in D$ and $q\in T^*X_j$ be in the characteristic set of $P_j - \lambda$, that is the zero set of $p_j - \re\lambda$, where $p_j$ is the semiclassical principal symbol of $P_j$. Let $\gamma_-:(-\infty,0]\to T^*X'_j$ (or $\gamma_-:(-t_q,0]\to T^*X'_j$ in case this is not defined for all time) be the
backward $P_j$-bicharacteristic from $q$.
We say that the resolvent $R_j(\lambda)$ is {\em semiclassically outgoing at $q$}
if
\begin{gather}\label{eq:outgoing-hyp}
u \in L^2_{\comp}(X_j)\ \text{polynomially bounded},\ \WFh(u)\cap\gamma_-=\emptyset
\end{gather}
implies that
\begin{gather}\label{eq:outgoing-concl}
q\notin\WFh(R_j(\lambda)u).
\end{gather}

We say that the resolvent $R_j(\lambda)$
is {\em off-diagonally semiclassically outgoing at $q$}
if \eqref{eq:outgoing-concl} holds provided we add
$q\notin T^*\supp u$ to the hypotheses \eqref{eq:outgoing-hyp}.
\end{Defn}

\begin{rem}\label{rem:outgoing}
Since $u$ is compactly supported,
the definition involves only the cutoff resolvent.

{\em In this paper we only need to make the assumption that
the resolvents $R_j(\lambda)$
are off-diagonally semiclassically outgoing. However, for brevity, we
use the term `semiclassically outgoing' in place of 
`off-diagonally semiclassically outgoing' throughout the paper.}

A reason for making the weaker hypothesis of being off-diagonally
semiclassically outgoing is that it is sometimes easier to check: typically the Schwartz kernel of $R_j(\lambda)$ is
simpler away from the diagonal than at the diagonal (where it may be a semiclassical
paired Lagrangian distribution), and the off-diagonal outgoing property follows
easily from the oscillatory nature of the Schwartz kernel there; see
the third paragraph of \S\ref{s:infinity}.
\end{rem}

Our microlocal assumption is then that
\begin{enumerate}
\item[(0-OG)]
$R_0(\lambda)$ is semiclassically outgoing at all $q\in T^*(X_0\cap X_1)$
(in the characteristic
set of $P_0$),
\item[(1-OG)]
$R_1(\lambda)$ is semiclassically outgoing at all $q\in T^*(X_0\cap X_1)$
(in the characteristic
set of $P_1$) such that $\gamma_-$ is disjoint from
$T^*(X'_1\setminus(X\setminus X_0)) = T^*(X'_1 \cap \{x >4\})$, thus disjoint from any trapping in $X_1$.
\end{enumerate}
In fact, for $R_0(\lambda)$, it is sufficient to have the property in
Definition~\ref{Def:outgoing} for $u\in L^2(X_0\cap X_1)$ (i.e.\ $u$ supported
in $X_0\cap X_1$).

The main result of the paper is the following general theorem.
\begin{thm}\label{t:main}
Under the assumptions of this section, there exists $h_0\in (0,1)$ such that
for $h<h_0$, $R(\lambda)$ continues analytically to $D$, where it obeys the bound
\[\|\rho R(\lambda) \rho\|\le Ch^2a_0^2a_1.\]
\end{thm}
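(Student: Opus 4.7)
The plan is to build a parametrix $F(\lambda)$ for $P-\lambda$ by gluing the two model resolvents $R_0$ and $R_1$ with a partition of unity, iterate the construction three times so as to push the remainder along the Hamilton flow, and invoke the outgoing and convexity hypotheses to conclude that the resulting remainder is $\Oh(h^\infty)$; this makes $I+\text{remainder}$ invertible by Neumann series and yields $R(\lambda)$ on $D$, with the norm bound read off by counting commutators and model resolvents.

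First I would choose cutoffs $\chi_0,\chi_1\in\CI(\olX;[0,1])$ with $\chi_0+\chi_1\equiv 1$, $\supp\chi_0\subset\{x<3\}$, $\supp\chi_1\subset\{x>2\}$, together with fattened versions $\tilde\chi_j$ satisfying $\tilde\chi_j\equiv 1$ near $\supp\chi_j$ and $\supp\tilde\chi_j\subset X_j$. Setting
\[ F_0(\lambda) \Def \tilde\chi_0\,R_0(\lambda)\,\chi_0 + \tilde\chi_1\,R_1(\lambda)\,\chi_1, \]
the identity \eqref{e:modelagree} gives $(P-\lambda)F_0 = I + E_0$ with
\[ E_0 \Def [P,\tilde\chi_0]\,R_0\,\chi_0 + [P,\tilde\chi_1]\,R_1\,\chi_1, \]
where each commutator is a first-order semiclassical differential operator carrying one power of $h$ and supported in $\{d\tilde\chi_j\ne 0\}\subset X_0\cap X_1$.

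The key step is the microlocal analysis of $E_0^k$. For polynomially bounded $u$, the outgoing hypotheses \textup{(0-OG)} and \textup{(1-OG)} applied to the two summands of $E_0$ show that $\WFh([P,\tilde\chi_j]R_j\chi_j u)\subset T^*\supp d\tilde\chi_j$ is the image of $\WFh(u)\cap T^*\supp\chi_j$ under a finite segment of the forward $P_j$-bicharacteristic flow; the convexity \eqref{eq:convexity} and the bicharacteristic convexity \eqref{eq:X_1-convex} of $X_1$ then bound how many bicharacteristic segments in $X_0\cap X_1$ can be concatenated. Iterating three times, I would then form
\[ F(\lambda) \Def F_0(\lambda)\bigl(I - E_0(\lambda) + E_0(\lambda)^2\bigr), \]
so that $(P-\lambda)F = I+E_0^3$. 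The concatenation argument should rule out every cross-term in the expansion of $E_0^3$, yielding $\WFh(E_0^3 u)=\emptyset$ and hence $E_0^3 = \Oh(h^\infty)$. For $h$ sufficiently small the Neumann series for $(I+E_0^3)^{-1}$ converges to $I+\Oh(h^\infty)$, so $R(\lambda) = F(\lambda)(I+E_0^3)^{-1}$ gives the analytic continuation of $R(\lambda)$ to $D$.

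For the norm bound, I would insert the weights $\rho_j,\rho_j^{-1}$ around each occurrence of $R_j$ and use $\|\rho_j R_j\rho_j\|\le a_j$. The dominant contribution to $\rho F(\lambda)\rho$ is the summand of $\rho F_0 E_0^2\rho$ containing two copies of $R_0$, one of $R_1$, and two commutators, giving $\|\rho R(\lambda)\rho\|\le C h^2 a_0^2 a_1$. The main obstacle is the wavefront-set bookkeeping for $E_0^3$: one must simultaneously invoke \eqref{eq:convexity}, \eqref{eq:X_1-convex}, and the \emph{off-diagonal} form of the outgoing hypotheses, tracking for every cross-term both the existence of the backward bicharacteristic used in the outgoing hypothesis and the spatial separation of supports of source and target that makes the off-diagonal form applicable. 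A secondary issue is verifying that each resolvent in the expansion of $F(I+E_0^3)^{-1}$ can be flanked by compatible weights so that the a priori bound $\|\rho_j R_j\rho_j\|\le a_j$ applies without loss.
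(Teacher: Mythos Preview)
Your approach is essentially the paper's: the shifted cutoffs $\chi_j(x\mp1)$ there play exactly the role of your fattened $\tilde\chi_j$, and the paper's targeted iteration $F(I-A_0-A_1+A_1A_0)$ differs from your $F_0(I-E_0+E_0^2)$ only by terms already $\Oh(h^\infty)$. The bookkeeping you flag as the main obstacle reduces to two facts---$B_j^2=0$ exactly by disjointness of $\supp d\tilde\chi_j$ and $\supp\chi_j$, and the single asymmetric lemma that $B_0B_1=\Oh(h^\infty)$ (convexity forbids a forward bicharacteristic going $\{x>2\}\to\{x<2\}\to\{x>3\}$) while $B_1B_0$ is \emph{not} small---so the only nonzero terms of $E_0^3$ are $B_0B_1B_0$ and $B_1B_0B_1$, each containing a $B_0B_1$ factor.
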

In particular, when $a_0 = C/h$, we find that $\rho R(\lambda) \rho$ obeys (up to constant factor) the same bound as $\rho_1 R_1(\lambda) \rho_1$, the model operator with infinity suppressed.

\begin{rem}
The only way we use convexity is to argue that no bicharacteristics of $P$ go from $\{x>2+ \eps\}$ to $\{x<2\}$ and back to $\{x> 2+\eps\}$ for some $\eps > 0$. This is also fulfilled in some settings in which the stronger condition \eqref{eq:convexity} does not hold, for example when $X$ has cylindrical or asymptotically cylindrical ends. In particular, some mild concavity is allowed.
\end{rem}

\section{Proof of main theorem}\label{s:proof}

Let $\chi_1 \in C^\infty(\R;[0,1])$ be such that $\chi_1 = 1$ near $\{x \ge 3\}$, and $\supp \chi_1 \subset \{x > 2\}$, and let $\chi_0 = 1 - \chi_1$.

Define a right parametrix for $P$ by
\begin{equation}\label{e:fdef}
F \Def \chi_0(x-1)  R_0(\lambda)  \chi_0(x) + \chi_1(x+1)  R_1(\lambda) \chi_1(x)
\end{equation}
We then put
\[PF = \Id + [P,\chi_0(x-1)]R_0(\lambda) \chi_0(x) + [P,\chi_1(x+1)]R_1(\lambda)\chi_1(x) \Def \Id + A_0 + A_1.\]

This error is large in $h$ due to semiclassical propagation of
singularities: in general we have only $\|A_j\rho_j\|_{L^2 \to L^2}
\le C h a_j(h)$, and $h a_j(h) \ge 1$, and thus $\|A_j\rho_j\|$ typically
does not go to 0 with $h$. (Note that there is no need for a weight on
the left of $A_j$ in view of the support of the cutoffs.)
However, using an iteration argument we can replace it by a small error. Observe that by disjointness of supports of $d\chi_0(.-1)$ and $\chi_0$, resp.\ $d\chi_1(.+1)$ and $\chi_1$, we have
\begin{equation}\label{e:a2}A_0^2 = A_1^2 = 0,\end{equation}
while Lemma \ref{zigzag:worse} below implies that
\begin{equation}\label{e:a0a1}\|A_0A_1\|_{L^2 \to L^2} = \Oh(h^\infty).\end{equation}
This is the step in which we exploit the semiclassical propagation of singularities (see Figure \ref{f:noreturn}). Note that we have
\[
A_0A_1 =  [P,\chi_0(x-1)]\rho_0R_0(\lambda)  \rho_0 [P,\chi_1(x+1)]\rho_1 R_1(\lambda)\rho_1\chi_1(x),
\]
that is to say, inserting weights $\rho_0$ and $\rho_1$ amounts to multiplying by $1$ thanks to the cutoff functions $\chi_j$ which are present.

\begin{lem}\label{zigzag:worse}
Suppose that $\varphi_1,\varphi_2,\varphi_3$ are compactly supported semiclassical differential operators  (i.e. given in local coordinates by $\sum_\alpha a_\alpha(z) h^{|\alpha|}D_z^\alpha$ where the sum is over a finite set of multiindices $\alpha$) with
\[\supp \varphi_1 \subset \{2 < x\}, \quad \supp \varphi_2 \subset \{1 < x < 2\}, \quad \supp \varphi_3 \subset \{3 < x < 4\}.\]
Then
\[\|\varphi_3 R_0(\lambda) \varphi_2 R_1(\lambda) \varphi_1 \|_{L^2 \to L^2} = \Oh(h^\infty).\]
\end{lem}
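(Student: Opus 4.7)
The plan is to trace the semiclassical wavefront set through the composition $\varphi_3 R_0(\lambda) \varphi_2 R_1(\lambda) \varphi_1$, applying the off-diagonal outgoing hypotheses (1-OG) and then (0-OG) at two successive stages. Fix a polynomially bounded $u \in L^2$ and a point $q \in T^*\{3 < x < 4\}$ in the characteristic set of $P_0 - \lambda$, and let $\gamma_-^{(0)}$ be the backward $P_0$-bicharacteristic from $q$.

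The geometric heart of the argument is a two-part no-return statement. First, by the convexity assumption \eqref{eq:convexity}, any interior critical point of $x$ along a bicharacteristic while in $X_0$ is a strict local maximum. Starting from $q$ with $x(q) > 3$ and tracing $\gamma_-^{(0)}$ backward, $x$ must first cross $\{x = 2\}$ at some time $t_0$ with $\dot x(t_0) > 0$; an interior minimum would be required for $x$ to later return to $2$, which is forbidden. Hence for any $p \in \gamma_-^{(0)} \cap T^*\{1 < x < 2\}$, the backward continuation of $\gamma_-^{(0)}$ from $p$ remains in $\{x \le 2\}$ as long as it stays in $X_0$. Second, on the overlap $\{1 < x < 4\}$ the $P_0$- and $P_1$-bicharacteristics coincide (both agree with those of $P$), so the backward $P_1$-bicharacteristic $\gamma_-^{(1)}$ from $p$ agrees with this continuation until it either remains in $\{1 < x \le 2\}$ forever or exits $X_1$ through $\{x = 1\}$; by \eqref{eq:X_1-convex} it cannot re-enter $X_1$ afterward. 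Consequently the part of $\gamma_-^{(1)}$ lying over $X$ is contained in $T^*\{x \le 2\}$, and in particular $\gamma_-^{(1)} \cap T^*(X_1' \cap \{x > 4\}) = \emptyset$.

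With this geometry in hand, the wavefront bookkeeping is routine. Since $\WFh(\varphi_1 u) \subset T^*\supp \varphi_1 \subset T^*\{x > 2\}$ is disjoint from $\gamma_-^{(1)}$, and $p \notin T^*\supp(\varphi_1 u)$ because $x(p) < 2$, hypothesis (1-OG) applies at $p$ and gives $p \notin \WFh(R_1(\lambda) \varphi_1 u)$. Multiplication by $\varphi_2$ then shows $\WFh(\varphi_2 R_1(\lambda) \varphi_1 u)$ is contained in $T^*\{1 < x < 2\}$ and disjoint from $\gamma_-^{(0)}$; together with $q \notin T^*\supp(\varphi_2 R_1(\lambda) \varphi_1 u)$ this activates (0-OG), yielding $q \notin \WFh(R_0(\lambda) \varphi_2 R_1(\lambda) \varphi_1 u)$. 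For $q$ outside the characteristic set of $P_0 - \lambda$ or at fiber-infinity, semiclassical elliptic regularity gives the same conclusion directly. Hence $\WFh(\varphi_3 R_0(\lambda) \varphi_2 R_1(\lambda) \varphi_1 u) = \emptyset$, and the output is $\Oh(h^\infty)$ in $L^2$ for each such $u$.

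The operator-norm bound then follows from this pointwise statement by a standard semiclassical uniformity argument: localize near each $q \in T^*\supp \varphi_3$ by a pseudodifferential cutoff, extract a uniform $\Oh(h^\infty)$ bound on each cutoff composition, and combine via a finite partition of unity on the fiber-radially compactified cotangent bundle over $\supp \varphi_3$. I expect the geometric step to be the main obstacle: it is where the pointwise convexity condition \eqref{eq:convexity} must be parlayed into a global no-return of backward bicharacteristics through the gluing annulus, and where \eqref{eq:X_1-convex} must be coordinated with it so that both outgoing hypotheses apply to the \emph{same} stretch of the relevant bicharacteristic.
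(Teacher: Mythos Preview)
Your argument follows the same route as the paper's, which factors the lemma into a geometric no-return statement (Lemma~\ref{lemma:bich-convex}) and a wavefront-set tracing argument (Lemma~\ref{zigzag}), and the core ideas are correct. Two places where the paper is more careful than your write-up: (i) the convexity hypothesis \eqref{eq:convexity} is stated only for $P$-bicharacteristics in $X_0$, so the no-return argument must first reduce the $P_0$- and $P_1$-segments to $P$-segments lying entirely in $X_0$---the paper does this by restricting the $P_0$-segment to the portion before $x$ first reaches $x(q)$ and by invoking \eqref{eq:X_1-convex} to keep the $P_1$-segment inside $X_1$, whereas your sentence about ``first crossing $\{x=2\}$ at $t_0$'' tacitly assumes $\gamma_-^{(0)}$ remains in $X_0$ on $[t_0,0]$; (ii) the passage from the per-$u$ bound to the operator-norm bound is not completed by a partition of unity alone---the outgoing hypotheses (0-OG), (1-OG) are purely wavefront-set statements and carry no uniformity in $u$, so you need Banach--Steinhaus (as the paper does, and remarks on) to close.
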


\begin{figure}
\includegraphics{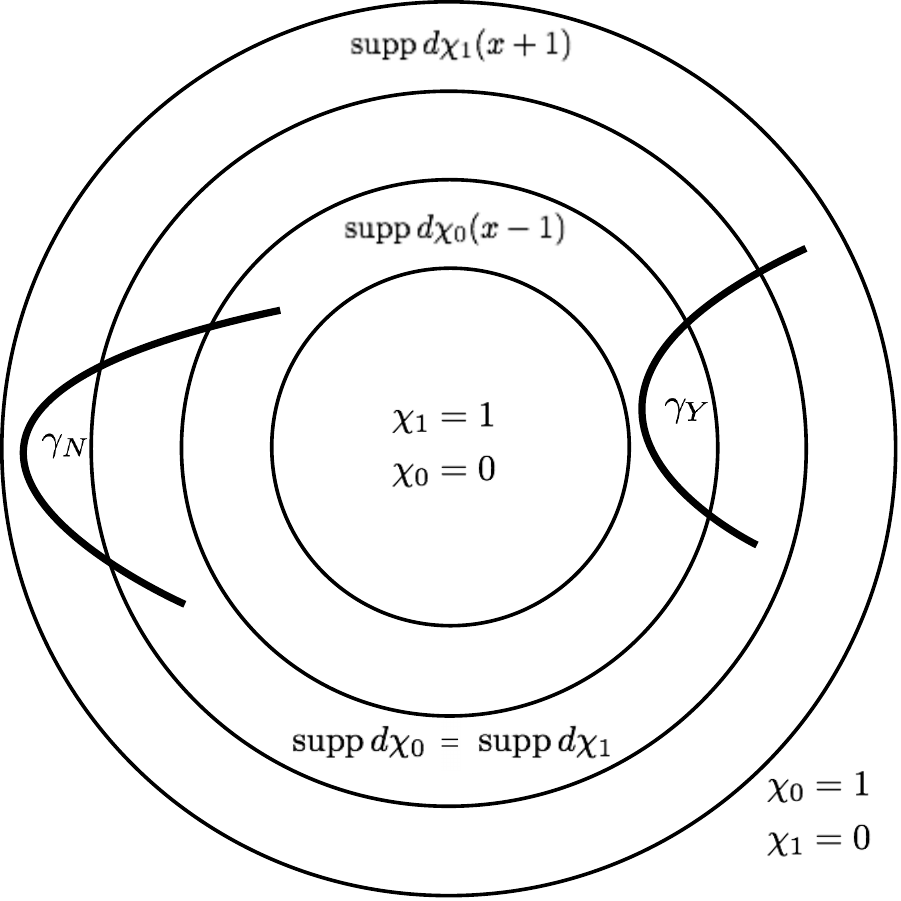}
\caption{The concentric circles indicate integer level sets of $x$: the outermost one is $x=1$ and the innermost $x=4$. The supports of the various cutoffs are indicated (note that the supports are contained in the interiors of their respective annuli). The trajectory $\gamma_N$ is ruled out by the convexity assumption \eqref{eq:convexity}, and this is exploited by Lemma \ref{zigzag:worse}. The trajectory $\gamma_Y$ is possible, and this is the reason a third iteration is needed in the parametrix construction.}\label{f:noreturn}
\end{figure}

Before proving this lemma we show how \eqref{e:a0a1} implies Theorem \ref{t:main}. We solve away the first error by writing, using \eqref{e:a2}
\[P(F - F A_0) = \Id + A_1 - A_0 A_0 - A_1 A_0 = \Id + A_1 - A_1 A_0.\]
Similarly we have
\[P(F - F A_0 - F A_1) = \Id - A_1 A_0 - A_0 A_1.\]

The last term is already $\Oh(h^\infty)$ by \eqref{e:a0a1}, but $A_1A_0$ is not yet small. We thus repeat this process for $A_1 A_0$ to obtain
\[\begin{split}
P(F - F A_0 - F A_1 + F A_1 A_0) &= \Id - A_0 A_1 + A_0 A_1 A_0 + A_1 A_1 A_0\\
&= \Id - A_0 A_1 + A_0 A_1 A_0 .
\end{split}\]

We now observe that both remaining error terms are of size $\Oh(h^\infty)$ thanks to \eqref{e:a0a1}.
Correspondingly, $\Id - A_0 A_1 + A_0 A_1 A_0$ is invertible for sufficiently small $h$, and the inverse is of the form $\Id+E$,
with $E=\Oh(h^\infty)$.
To estimate the resolvent we write out
\[F - F A_0 - F A_1 + F A_1 A_0 = F - \chi_1(x+1) R_1(\lambda)\chi_1 A_0  + \chi_0(x-1) R_0(\lambda)\chi_0 (-A_1+ A_1 A_0).\]
We then find that
\[\|\rho R(\lambda)\rho \| \le C(a_0 + a_1  + 2 h a_0a_1  + h^2 a_0^2a_1) \le Ch^2 a_0^2a_1.\]
This completes the proof that Lemma~\ref{zigzag:worse} implies Theorem~\ref{t:main}. Note that only $\rho_0$, the weight for $R_0$, and not $\rho_1$, appears in the definition of $\rho$. This is because $R_1(\lambda)$ is already multiplied by a compactly supported cutoff in every place where it appears in our parametrix (but this is not the case for $R_0(\lambda)$).

Lemma \ref{zigzag:worse} follows from the following two lemmas, for the hypotheses
(1)-(3) of Lemma~\ref{zigzag} are satisfied by $\varphi_j$ as in
Lemma~\ref{zigzag:worse},
and (4) follows from the support properties of $\varphi_j$ and
Lemma~\ref{lemma:bich-convex}.

\begin{lem}\label{zigzag} Suppose that $\varphi_1,\varphi_2,\varphi_3$ are
semiclassical differential operators with the properties that
\begin{enumerate}
\item
$\varphi_1$ is supported in $X_1$,
\item
$\varphi_2,\varphi_3$ are supported in $X_0\cap X_1$,
\item
$\supp\varphi_3\cap\supp\varphi_2=\emptyset$,
and $\supp\varphi_2\cap\supp\varphi_1=\emptyset$,
\item
there is no bicharacteristic of $P_1$ from a point $q_1 \in T^*(\supp \varphi_1 \cup(X_1\setminus X_0))$
to a point $q_2 \in T^*\supp \varphi_2 $ followed by a bicharacteristic of
$P_0$ from $q_2$ to a point $q_3 \in T^*\supp \varphi_3 $.
\end{enumerate}
Then
\[\|\varphi_3 R_0(\lambda) \varphi_2 R_1(\lambda) \varphi_1 \|_{L^2 \to L^2} = \Oh(h^\infty),\]
\end{lem}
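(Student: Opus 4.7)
Take any $f \in L^2(X)$ with $\|f\|_{L^2} = 1$, and define successively
\[u_1 = \varphi_1 f, \quad u_2 = R_1(\lambda) u_1, \quad u_3 = \varphi_2 u_2, \quad u_4 = R_0(\lambda) u_3.\]
Because $\rho_1 \equiv 1$ on $X_1 \supset \supp\varphi_1$ and $\rho_0 \equiv 1$ on $X_0 \cap X_1 \supset \supp\varphi_2$, the hypothesis \eqref{eq:model-bound} yields polynomial $L^2$-bounds on each $u_i$, so their semiclassical wave front sets are well defined. It suffices to show $\WFh(\varphi_3 u_4) = \emptyset$, as this gives $\|\varphi_3 u_4\|_{L^2} = \Oh(h^\infty)$ uniformly in $f$ and hence the operator norm bound.

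The main claim is $T^*\supp\varphi_3 \cap \WFh(u_4) = \emptyset$. Fix $q_3 \in T^*\supp\varphi_3$. If $q_3$ is not in the characteristic set of $P_0 - \lambda$, then from $(P_0 - \lambda) u_4 = u_3$ semiclassical elliptic regularity gives $q_3 \in \WFh(u_4) \Longrightarrow q_3 \in \WFh(u_3) \subset T^*\supp\varphi_2$, which contradicts the disjointness assumption (3). Otherwise $q_3$ is characteristic, and we seek to apply (0-OG) at $q_3$: the off-diagonal condition $q_3 \notin T^*\supp u_3$ is automatic from (3), so it remains to establish $\gamma_-^{P_0}(q_3) \cap \WFh(u_3) = \emptyset$.

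For this, take any $q_2 \in T^*\supp\varphi_2$ on $\gamma_-^{P_0}(q_3)$; I claim $q_2 \notin \WFh(u_2)$. If $q_2$ is not characteristic for $P_1 - \lambda$, elliptic regularity applied to $(P_1 - \lambda) u_2 = u_1$ gives $q_2 \in \WFh(u_2) \Longrightarrow q_2 \in \WFh(u_1) \subset T^*\supp\varphi_1$, contradicting (3). Otherwise $q_2$ is characteristic, and hypothesis (4) ensures that $\gamma_-^{P_1}(q_2)$ is disjoint from $T^*\supp\varphi_1 \cup T^*(X_1 \setminus X_0)$; these two disjointness statements supply, respectively, the wave front avoidance $\WFh(u_1) \cap \gamma_-^{P_1}(q_2) = \emptyset$ and the non-trapping clause required by (1-OG), while $q_2 \notin T^*\supp u_1$ follows again from (3). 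Thus (1-OG) yields $q_2 \notin \WFh(u_2)$, and since $\WFh(u_3) \subset \WFh(u_2) \cap T^*\supp\varphi_2$, the reduction is complete; (0-OG) then closes the main claim.

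The main obstacle is purely organizational: verifying that the combined dynamical hypothesis (4) provides exactly the two ingredients (avoidance of trapping and avoidance of the data support along the backward $P_1$-flow) required to legitimately invoke (1-OG) at every intermediate point $q_2$ produced by flowing back under $P_0$ from $\supp\varphi_3$ to $\supp\varphi_2$. Quantitative versions of elliptic regularity and the outgoing property, together with the polynomial $L^2$-bounds on the $u_i$, promote the wave front conclusion to a uniform operator norm estimate.
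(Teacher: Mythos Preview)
Your proof is correct and matches the paper's approach: both reduce to showing $\WFh(\varphi_3 R_0 \varphi_2 R_1 \varphi_1 u) = \emptyset$ for polynomially bounded $u$, then trace back via ellipticity off the characteristic set and the outgoing conditions (0-OG), (1-OG) on it, with hypothesis (4) ruling out any resulting chain $q_1 \to q_2 \to q_3$. The only point of difference is the passage from the pointwise conclusion to the uniform operator-norm bound: since Definition~\ref{Def:outgoing} is purely qualitative, wavefront emptiness for each fixed $f$ does not automatically give uniformity in $f$, and the paper closes this with Banach--Steinhaus, whereas your appeal to ``quantitative versions'' of the outgoing property assumes a bit more than is formally hypothesized (though, as the paper itself remarks, this is how such estimates are actually proved).
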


\begin{lem}\label{lemma:bich-convex}
There is no bicharacteristic of $P_1$ from a point $q_1 \in T^*\{x>2\}$ to a point $q_2 \in T^*\{x<2\}$ followed by a bicharacteristic of $P_0$ from $q_2$ to a point $q_3 \in T^*\{x>2\}$.
\end{lem}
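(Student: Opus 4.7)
The plan is to derive a contradiction by concatenating short portions of $\gamma_1$ (the $P_1$-bicharacteristic from $q_1$ to $q_2$) and $\gamma_0$ (the $P_0$-bicharacteristic from $q_2$ to $q_3$) at $q_2$ into a single $P$-bicharacteristic lying in $X_0$ along which $x$ has an interior local minimum --- directly contradicting \eqref{eq:convexity}. To localize, I would use the intermediate value theorem to pick the largest $s^* \in [t_1,t_2]$ with $x(\gamma_1(s^*)) = 2$ and the smallest $r^* \in [t_2,t_3]$ with $x(\gamma_0(r^*)) = 2$; on $(s^*,t_2]$ and $[t_2,r^*)$ one then has $x < 2$, while $x$ equals $2$ at both $s^*$ and $r^*$.

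The main step is to argue that the two truncated pieces $\gamma_1|_{[s^*,t_2]}$ and $\gamma_0|_{[t_2,r^*]}$ are actually $P$-bicharacteristics, which is where the standing hypotheses enter. Taking the relevant case $x(q_2) \in (1,2)$, so that $q_2 \in X_0 \cap X_1$ (which is the setting arising in the application to Lemma~\ref{zigzag:worse} via $\supp \varphi_2 \subset \{1 < x < 2\}$), the piece $\gamma_1|_{[s^*,t_2]}$ has both endpoints in $X_1 = \{x > 1\}$ and satisfies $x \le 2 < 4$ throughout; the bicharacteristic convexity of $X_1$ in $X_1'$ asserted by \eqref{eq:X_1-convex} forbids $\gamma_1$ from leaving and re-entering $X_1$, so this piece lies in $X_0 \cap X_1$, and by \eqref{e:modelagree} coincides with a $P$-bicharacteristic. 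For $\gamma_0|_{[t_2,r^*]}$: $x \le 2 < 4$ and $x > 0$ (automatic from the ambient geometry), so this piece lies in $X_0$ and likewise agrees with a $P$-bicharacteristic.

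Since the $P$-Hamilton vector field at $q_2 \in X_0 \cap X_1$ agrees with both the $P_0$- and $P_1$-Hamilton vector fields, uniqueness of integral curves glues the two pieces into a single $P$-bicharacteristic $\gamma : [s^*,r^*] \to T^*X_0$. Along $\gamma$, $x$ equals $2$ at the endpoints and $x(q_2) < 2$ at the interior time $t_2$, so $x \circ \gamma$ attains its infimum at some interior critical point $t_0 \in (s^*,r^*)$, at which $\dot x(t_0) = 0$ and $\ddot x(t_0) \ge 0$. This contradicts \eqref{eq:convexity}, which demands $\ddot x < 0$ at every critical point of $x$ along a $P$-bicharacteristic in $X_0$. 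The principal obstacle is simply ensuring that each truncated piece remains inside its model-agreement region so that it can be identified with a $P$-bicharacteristic: the convexity assumption \eqref{eq:X_1-convex} on $X_1$ handles the $\gamma_1$ piece cleanly, while crude bounds on $x$ suffice for the $\gamma_0$ piece.
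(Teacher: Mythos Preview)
Your proof is correct and follows essentially the same strategy as the paper's: glue (truncated portions of) the two bicharacteristics into a single $P$-bicharacteristic and derive a contradiction from \eqref{eq:convexity} at an interior minimum of $x\circ\gamma$. The paper's bookkeeping differs only slightly---it keeps the \emph{full} $\gamma_1$ (applying \eqref{eq:X_1-convex} to its endpoints $q_1,q_2\in X_1$ to place it entirely in $T^*X_1$, hence already a $P$-bicharacteristic via $P|_{X_1}=P_1|_{X_1}$) and truncates $\gamma_0$ at the first time $x$ reaches $x(q_3)$ rather than at level $2$; your extra truncation of $\gamma_1$ is harmless but unnecessary.
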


\begin{proof}[Proof of Lemma \ref{lemma:bich-convex}]
We prove this first in the case where the two curves constitute a bicharacteristic of $P$. If there were such a bicharacteristic, say $\gamma \colon [t_0,t_1]\to T^*X$, with $x(\gamma(t_0)),x(\gamma(t_1))>2$, and
$x(\gamma(\tau))<\min(x(\gamma(t_0)),x(\gamma(t_1)))$ for some
$\tau\in(t_0,t_1)$, then the function $x\circ\gamma$ would attain its minimum in the interior of $(t_0,t_1)$ at some point (and would be $<2$ there),
and the second derivative would be nonnegative there, contradicting our convexity assumption \eqref{eq:convexity}.

We now reduce to this case. Assume that there are curves,
$\gamma_0:[t_2,t_3]\to T^*X'_0$
a bicharacteristic of $P_0$ from $q_2$ to $q_3$ and
$\gamma_1:[t_1,t_2]\to T^*X'_1$
a bicharacteristic of $P_1$ from $q_1$ to $q_2$.
Now, by the
bicharacteristic convexity of $X_1$ in $X_1'$, $\gamma_1$ is completely in $X_1$
(since its endpoints are there), so it is a $P$ bicharacteristic. On the other hand,
$\gamma_0$
need not be a $P$ bicharacteristic since it might intersect $T^*(X_1\setminus X_0)$.
However, taking infimum $t_3'$
of times $t$ at which $x(\gamma(t))\geq x(q_3)$, $\gamma_0|_{[t_2,t'_3]}$ is
a $P$ bicharacteristic since it is disjoint from $T^*\{x>x(q_3)\}$ in view
of $x(q_2)<2$ and the intermediate value theorem. Thus, $\gamma:[t_1,t_3']\to T^*X$
given by $\gamma_1$ on $[t_1,t_2]$ and $\gamma_0$ on $[t_2,t_3']$ is
a $P$ bicharacteristic, with $x(\gamma(t_1))>2$, $x(\gamma(t_2))<2$,
$x(\gamma(t_3'))>2$, completing the reduction to the case in the previous paragraph.
\end{proof}

\begin{proof}[Proof of Lemma \ref{zigzag}]
First suppose that $u \in L^2(X)$ is polynomially bounded; we claim that
\begin{equation}\label{eq:pointwise-bd}
\|\varphi_3 R_0(\lambda) \varphi_2 R_1(\lambda) \varphi_1 u\|_{L^2} = \Oh(h^\infty).
\end{equation}
For this, it suffices to show that
$\WFh(\varphi_3 R_0(\lambda) \varphi_2 R_1(\lambda) \varphi_1 u)=\emptyset$.
Note that by the polynomial boundedness assumption on the resolvent,
$\varphi_3 R_0(\lambda) \varphi_2 R_1(\lambda) \varphi_1 u$, as well as
$\varphi_2 R_1(\lambda) \varphi_1 u$, are polynomially bounded.

So suppose $q_3\in \WFh(\varphi_3 R_0(\lambda) \varphi_2 R_1(\lambda) \varphi_1 u)$,
so in particular $q_3\in T^*\supp\varphi_3\cup S^*\supp\varphi_3$
and, as $\varphi_3$ is microlocal,
$q_3\in \WFh(R_0(\lambda) \varphi_2 R_1(\lambda) \varphi_1 u)$.
Now, if $q_3$ is not in the characteristic set of $P_0$, then by microlocal
ellipticity of $P_0$, $q_3\in \WFh(\varphi_2 R_1(\lambda) \varphi_1 u)$, thus in
$T^*\supp\varphi_2\cup S^*\supp\varphi_2$. This contradicts (3).

So we may assume that $q_3$ in the characteristic set of $P_0$ (and hence in particular not in $S^*X$).
By (0-OG), noting that $\varphi_2$ and $\varphi_3$ have disjoint supports,
there is a point
$q_2\in\WFh(\varphi_2 R_1(\lambda) \varphi_1 u)$ on the backward $P_0$-bicharacteristic from $q_3$. Thus $q_2\in T^*\supp\varphi_2$
and $q_2\in \WFh(R_1(\lambda) \varphi_1 u)$. By (1-OG), noting that
$\varphi_1$ and $\varphi_2$ have disjoint supports, either
the backward $P_1$ bicharacteristic from $q_2$
intersects $T^*(X_1\setminus X_0)$, in which case we can take any $q_1$ on it
in this region, or there is a point $q_1$ on this backward bicharacteristic
in $\WFh(\varphi_1 u)$, which is thus in $T^*\supp\varphi_1$. Since this
contradicts (4), it completes
the proof of \eqref{eq:pointwise-bd}.

To complete the proof of the lemma, we just note that for any $N$,
the family of
operators
$$
h^{-N}\varphi_3 R_0(\lambda) \varphi_2 R_1(\lambda) \varphi_1,
$$
dependent on $h$ and $\lambda$, is continuous on $L^2$, and for each $u$,
$h^{-N}\varphi_3 R_0(\lambda) \varphi_2 R_1(\lambda) \varphi_1 u$ is uniformly
bounded in $L^2$. Thus, by the theorem of Banach-Steinhaus,
$h^{-N}\varphi_3 R_0(\lambda) \varphi_2 R_1(\lambda)\varphi_1$ is uniformly bounded (in $h$ and $\lambda$)
on $L^2$, completing the proof of the lemma.
\end{proof}

\begin{rem}
The application of Banach-Steinhaus is only needed because we merely made wavefront
set assumptions in Definition~\ref{Def:outgoing}. In practice, the wave front set
statement is proved by means of a uniform estimate, and thus
Banach-Steinhaus is superfluous.
\end{rem}

\begin{rem} Lemma \ref{zigzag:worse} holds with the same proof if $\varphi_j$ are instead semiclassical pseudodifferential operators with $\WFh \varphi_j$ in the cotangent bundle of the corresponding set. Note however that in this case slightly more care is needed in defining the $\varphi_j$ since their Schwartz kernels may no longer be compactly supported. This could be useful for applications where $P$ is not differential, as in \cite{sz}.
\end{rem}

\section{Model operators near infinity}\label{s:infinity}
In this section we describe some examples in which the assumptions on the
model at infinity, $P_0$, are satisfied.
Recall that the assumptions on $P_0$ are of three kinds:
\begin{enumerate}
\item
bicharacteristic convexity of level sets of $x$ for $0<x<4$,
\item
polynomial bounds for the cutoff resolvent,
\item
semiclassically outgoing resolvent.
\end{enumerate}

For simplicity, in this section we consider the case
\[
P_0 = -h^2\Delta_g - 1.
\]
We start with some general remarks.

First, in the setting where $X_0'$ is diffeomorphic to $\RR^n$,
has nonpositive sectional curvature and,
for fixed $z_0$ the function $x(z)=F(d(z,z_0))$ with $F'<0$,
\eqref{eq:convexity} follows from the Hessian comparison theorem \cite{sy, vw}.

Next, the semiclassically outgoing assumption is satisfied for
$R_0(\lambda)$ if the restriction of its Schwartz kernel to
$(X_1\cap X_0)^2\setminus\Diag$
is a semiclassical Fourier integral operator with canonical relation $\Lambda'$
corresponding
to forward propagation along bicharacteristics, i.e.\ $(y,z,\eta,\zeta)\in\Lambda'$
implies $(y,\eta)$ is on the forward bicharacteristic segment from $(z,\zeta)$.
Here $\Diag$ is the diagonal in $(X_1\cap X_0)^2$. Note that this is where
restricting the semiclassical outgoing condition to its off-diagonal version is
useful, in that usually the structure of the resolvent at the diagonal is slightly
more complicated (though the condition would still hold); see also
Remark~\ref{rem:outgoing}.

\subsection{Asymptotically Euclidean manifolds}\label{s:aeinfinity}
If $X$ is isometric outside of a compact set to Euclidean space we may take
$X_0=\RR^n$ with the Euclidean metric $g_0$, and $x^{-1}$
the distance function from a point in $\RR^n$. Thus, the convexity hypotheses
\eqref{eq:convexity} holds in view of geodesic convexity of the spheres. Moreover,
for $\Gamma_1>\Gamma>0$, $\lambda_0>0$,
the resolvent continues analytically to
$\{\lambda:\ \im\lambda<\Gamma h,\ \re\lambda>\lambda_0\}$ as an operator
$$
R(\lambda):e^{-\Gamma_1|z|} L^2\to e^{\Gamma_1 |z|} L^2
$$
with uniform estimates $\|R(\lambda)\|\leq Ch^{-1}$. Finally, $R(\lambda)$ is
a semiclassical FIO associated to the forward flow; indeed, with $\sqrt{}$ the square
root on $\C \setminus(-\infty,0]$ which is positive for positive $\lambda$,
its Schwartz kernel is (see e.g. \cite{mel})
$$
R(\lambda,y,z)=(h^{-1}\sqrt \lambda)^{n-2} e^{i\sqrt{\lambda}|y-z|/h}a(\sqrt{\lambda}|y-z|/h),
$$
where $a$ is a symbol (away from the origin).

The applications in this case have already been treated in \cite{Nonnenmacher-Zworski:Quantum, wz}, but for compactly supported cutoff functions. The novelty  in the present paper in this setting is that we use exponential weights $e^{-\Gamma_j|z|}$. More general asymptotically Euclidean manifolds, whose metrics have holomorphic coefficients near infinity, could probably also be treated: see \cite{wz0,wz} for more details on the needed assumptions and the proof of the analytic continuation of the resolvent, and \cite{vz,d} for semiclassical estimates and propagation of singularities. 

\subsection{Asymptotically hyperbolic manifolds}\label{s:ahinfinity}
The convexity assumption \eqref{eq:convexity} is satisfied for the geodesic flow on a general asymptotically hyperbolic metric. In the following lemma this is proved in a region $\{x < \eps\}$, but a rescaling of the boundary defining function gives it in the region $\{x < 4\}$. The computation is standard, but we include it for the reader's convenience.

\begin{lem}
Let $x$ be a boundary defining function on $\overline X$, a compact manifold with boundary, and let $g$ be a metric on the interior of the form
\[g = \frac{dx^2 + \tilde g}{x^2}, \textrm{ near } \D \overline{X}\]
where $\tilde g$ is a family of metrics on $\D \overline X$, smooth up to $\D \overline X$. Then for $x$ sufficiently small we have
\[\dot x(t)=0\Rightarrow \ddot x(t)<0\]
along geodesic bicharacteristics.
\end{lem}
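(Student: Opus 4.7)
My plan is a direct computation in boundary normal coordinates $(x,y)$ with dual variables $(\xi,\eta)$, using the Hamiltonian description of geodesic flow. The dual metric is $g^{-1}=x^{2}(\partial_x^2 + \tilde g^{-1}(x,y))$, so the geodesic Hamiltonian takes the form
\[
p(x,y,\xi,\eta) = x^2\bigl(\xi^2 + |\eta|^2_{\tilde g^{-1}(x,y)}\bigr),
\]
with $|\eta|^2_{\tilde g^{-1}(x,y)} = \tilde g^{ij}(x,y)\eta_i\eta_j$ smooth up to $x=0$. Bicharacteristics preserve $p$; for geodesics we may fix the energy level $p\equiv c$ with $c>0$ (the $c=0$ case corresponds to the zero section, on which both $\dot x$ and $\dot y$ vanish identically and so is irrelevant).

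First I would read off $\dot x = \partial_\xi p = 2x^2\xi$, so that the hypothesis $\dot x=0$ is equivalent to $\xi=0$ (since $x>0$ in the interior). Differentiating once more,
\[
\ddot x = \frac{d}{dt}(2x^2\xi) = 4x\dot x\,\xi + 2x^2\dot\xi,
\]
which at a point where $\dot x=0$ reduces to $\ddot x = 2x^2\dot\xi$. The Hamilton equation for $\xi$ gives
\[
\dot\xi = -\partial_x p = -2x\bigl(\xi^2 + |\eta|^2_{\tilde g^{-1}}\bigr) - x^2\,\partial_x|\eta|^2_{\tilde g^{-1}},
\]
and evaluating at $\xi=0$ yields
\[
\ddot x = -4x^3\,|\eta|^2_{\tilde g^{-1}(x,y)} - 2x^4\,\partial_x|\eta|^2_{\tilde g^{-1}(x,y)}.
\]

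Next I would eliminate $|\eta|^2_{\tilde g^{-1}}$ using the conservation law $p=c$: at $\xi=0$ this gives $x^2|\eta|^2_{\tilde g^{-1}} = c$, hence $|\eta|^2_{\tilde g^{-1}}=c/x^2$. Substituting,
\[
\ddot x = -4xc - 2x^4\,\partial_x|\eta|^2_{\tilde g^{-1}(x,y)} = -4xc + O(x^2),
\]
since $\partial_x|\eta|^2_{\tilde g^{-1}}$ is bounded near $\partial\overline X$ and $|\eta|^2_{\tilde g^{-1}}=c/x^2$ forces the $x^4\partial_x$ term to be $O(x^2)$ (one differentiates the smooth coefficients $\tilde g^{ij}$ and leaves $\eta$ alone, so this term is really $O(x^4\cdot c/x^2)=O(x^2c)$). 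Thus for $c>0$ fixed and $x$ small enough, $\ddot x<0$.

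The only step requiring care is the last one: I must confirm that the $-4xc$ term genuinely dominates uniformly over the compact boundary and over all energies in a fixed compact range, which follows from smoothness of $\tilde g$ up to $\partial\overline X$ (and compactness of $\partial\overline X$) together with the bound $|\eta|^2_{\tilde g^{-1}}=c/x^2$ forced by $\xi=0$ and $p=c$. The lemma as stated gives the conclusion for $x$ sufficiently small; the rescaling $x\mapsto x/x_0$ (cf.\ Remark~\ref{rem:constants}) then allows replacement of the neighborhood $\{x<\varepsilon\}$ by $\{x<4\}$, matching the setup in \S\ref{s:abstract}.
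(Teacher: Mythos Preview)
Your proof is correct and follows essentially the same strategy as the paper: compute $\dot x$ and $\ddot x$ directly from Hamilton's equations for the geodesic Hamiltonian and check the sign at a critical point of $x$. The only notable difference is a choice of fiber coordinates. The paper passes to the rescaled (``$0$-cotangent'') variables $\tau=x\xi$, $\mu=x\eta$, in which the Hamiltonian becomes $\tau^{2}+\tilde g^{-1}(\mu,\mu)$ and the computation of $H_{|\zeta|^{2}}$, $\dot x=2\tau x$, $\dot\tau$ proceeds without ever needing to invoke conservation of energy explicitly (since $\mu$ stays bounded on a fixed energy shell automatically in these variables). You instead stay in the original $(\xi,\eta)$ coordinates and compensate by substituting $|\eta|^{2}_{\tilde g^{-1}}=c/x^{2}$ from $p\equiv c$ to control the size of the remainder term. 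Both routes land on the same expression $\ddot x=-4xc+O(x^{2})$ and the same conclusion; the paper's version is the one more commonly seen in the asymptotically hyperbolic literature, while yours is a bit more self-contained.
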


As remarked in the introduction, it is possible to reduce a more general form of the metric $g$ to this one.  Namely, it suffices to assume that $\tilde g$ is a 2-cotensor which is a metric on $\D\overline{X}$ when restricted to $\D \overline{X}$: see \cite[Proposition 2.1]{js}.

\begin{proof}
If $(x,y)$ are coordinates on $X$ near $\D \overline X$ such that $y$ are coordinates on $\D \overline X$, and if $\xi$ is dual to $x$ and $\eta$ to $y$, then the geodesic Hamiltonian is given by
\[|\zeta|^2 = \tau^2 + \tilde g^{-1}(\mu,\mu),\]
where $\tau = x \xi$ and $\mu = x \eta$, and $\tilde g^{-1}$ is the bilinear form on $T^*\D \overline X$ induced by $\tilde g$.
Its Hamiltonian vector field is
\[ H_{|\zeta|^2} = \D_\xi |\zeta|^2 \D_x - \D_x |\zeta|^2 \D_\xi + (\D_\eta|\zeta|^2) \cdot \D_y - (\D_y|\zeta|^2)\cdot \D_\eta.\]
We use $\D_\xi = x \D_\tau$, $\D_\eta = x \D_\mu$ and ``$\D_x = \D_x + x^{-1}\mu \cdot \D_\mu + x^{-1}\tau\D_\tau$'', where in the last formula the left hand side refers to $(x,y,\xi,\eta)$ coordinates, and the right hand side to $(x,y,\tau,\mu)$ coordinates. This gives
\[\begin{split}H_{|\zeta|^2} &= x \D_\tau |\zeta|^2(\D_x + x^{-1} \mu \cdot \D_\mu + x^{-1} \tau\D_\tau) \\
&\quad- \left[\left(x\D_x + \mu \cdot \D_\mu + \tau \D_\tau\right)|\zeta|^2\right]\D_\tau + x(\D_\mu|\zeta|^2) \cdot \D_y - x(\D_y|\zeta|^2)\cdot \D_\mu.\end{split}\]
We cancel the $\D_\tau(|\zeta|^2)\tau \D_\tau$ terms, write $H_{\tilde g} = (\D_\mu|\zeta|^2) \cdot \D_y - (\D_y|\zeta|^2)\cdot \D_\mu$, substitute $|\zeta|^2 = \tau^2 + \tilde g(\mu,\mu)$, and use $\mu \cdot \D_\mu \tilde g^{-1}(\mu,\mu) = 2\tilde g^{-1}(\mu,\mu)$. Now
\[H_{|\zeta|^2} = 2\tau x\D_x + 2\tau \mu \cdot \D_\mu - (2\tilde g^{-1}(\mu,\mu) + x \D_x \tilde g^{-1}(\mu,\mu))\D_\tau + xH_{\tilde g}.\]
We now observe from this that, along flowlines of $H_{|\zeta|^2}$, we have $\dot x = 2 \tau x$ and $\dot \tau = - 2 \tilde g^{-1}(\mu,\mu) - x \D_x \tilde g^{-1}(\mu,\mu)$. Hence
\[\dot x(t)=0\Rightarrow \tau = 0,\]
in which case 
\[\ddot x = -4x \tilde g - 2 x^2 \D_x \tilde g^{-1}.\]
Since $\tilde g^{-1}|_{x=0}$ is positive definite, for sufficiently small $x$ this is always negative.
\end{proof}

If in addition $\tilde g$ is even in $x$, in the sense that the Taylor series at $x=0$ includes only even powers of $x$ (or see \cite[Definition 1.2]{g} for a more invariantly phrased version of this condition), then work of the second author \cite[Theorem 4.3]{v1}, \cite[Theorem 5.1]{v2} implies the polynomial bound \eqref{eq:model-bound} and the outgoing condition (0-OG) for
\[
P_0 = h^2\Delta_g - 1,
\]
when the manifold is nontrapping.
The outgoing condition which is proved in those theorems, when restricted to data in $L^2(X_0 \cap X_1)$, is the same as that in condition (0-OG) (for this purpose the weights are irrelevant). The resolvent estimate \cite[(4.27)]{v2} is that with
$\cR(\sigma)=(\Delta_g-(\frac{n-1}{2})^2-\sigma^2)^{-1}$ for
$\re\sigma\gg 0$,
for
$s>\frac{1}{2}+\Gamma/2$ and $-\Gamma/2<\im\sigma$,
\begin{equation}\label{e:vasyresest0}
\|x^{-(n-1)/2+i\sigma} \cR(\sigma)f\|_{H^s_{|\sigma|^{-1}}(\olX_{\even})}
\leq C|\sigma|^{-1}\|x^{-(n+3)/2+i\sigma}f\|_{H^{s-1}_{|\sigma|^{-1}}(\olX_{\even})}.
\end{equation}
We will show that this implies
\begin{equation}\label{e:vasyresest}
\|x^{1+\Gamma/2} R_0(\lambda) x^{5/2 + \Gamma/2}\|_{L^2_g(X) \to L^2_g(X)}\le C/h,
\end{equation}
uniformly for $\re \lambda \in [-E,E]$, $\im \lambda > -\Gamma$. This
argument is somewhat involved due to the rather different functions
spaces appearing in \eqref{e:vasyresest0} and
\eqref{e:vasyresest}, as already indicated by the presence of $\olX_{\even}$ in
\eqref{e:vasyresest0}; the results of \cite[Theorem 4.3]{v1},
\cite[Theorem 5.1]{v2} are obtained by extending an operator related
to the spectral family of the Laplacian across $\pa \olX_{\even}$ to a
larger space. Here $\olX_{\even}$
is $\olX$ as a topological manifold with boundary, but with smooth
structure given by even (in $x$) smooth functions on $\olX$; effectively
this means that the boundary defining function $x$ is replaced by
$\mu=x^2$.

\begin{proof}[Proof that \eqref{e:vasyresest0} implies
  \eqref{e:vasyresest}] We first recall the definition of
  $H^s_{|\sigma|^{-1}}(\olX_{\even})$, which is the standard
  semiclassical (with $|\sigma|^{-1}$ playing the role of the
  semiclassical parameter) Sobolev space on $\olX_{\even}$. A straightforward computation gives, see \cite[Section~1]{v2},
\begin{equation}\label{eq:L2-rel}
\|x^{-(n+1)/2}u\|_{L^2(\olX_{\even})}\sim \|u\|_{L^2_{g}(X)},
\end{equation}
where $L^2(\olX_{\even})$ is with respect to any smooth non-degenerate density on the
compact manifold $\olX_{\even}$), while $L^2_{g}(X)$ is the metric
$L^2$-space.
Furthermore, in local coordinates $(\mu,y)$,
using $2\pa_\mu=x^{-1}\pa_x$, for $l\geq 0$ integer, the squared
high energy $H^l_{|\sigma|^{-1}}(\olX_{\even})$
norm of $u$ is equivalent to
$$
\sum_{k+|\alpha|\leq l}\||\sigma|^{-k-|\alpha|}(x^{-1}\pa_x)^k \pa_y^\alpha u\|_{L^2(\olX_{\even})}^2.
$$

We now convert \eqref{e:vasyresest0}  into an $H^s_0(\olX)$-estimate, where $H^s_0(\olX)$ are
the zero-Sobolev spaces of Mazzeo and Melrose \cite{mm}, i.e.\ they
are the Sobolev spaces measuring regularity with respect to $\Diff_0(\olX)$, the algebra of
differential operators generated by $\Vf_0(\olX)$, the Lie algebra of
$\CI$ vector fields vanishing at
the boundary over $\CI(\olX)$, in the space $L^2_g(X)$. More
precisely, we need the semiclassical version $H^s_{0,|\sigma|^{-1}}(\olX)$ of
these spaces, in which
$|\sigma|^{-1}$ times $\Vf_0(\olX)$ is used to generate the differential operators. The square high energy $H^l_{0,|\sigma|^{-1}}(\overline X)$ norm of $u$ is equivalent to
\[
\sum_{k+|\alpha|\leq l}\||\sigma|^{-k-|\alpha|}(x\pa_x)^k (x\pa_y)^\alpha u\|_{L^2_g(X)}^2.
\]
Because of the ellipticity of $\Delta_g$ for these spaces (see \cite{Melrose-SaBarreto-Vasy:Semiclassical}), in the
precise sense that the standard principal symbol is elliptic even in
the semiclassical zero-calculus, this is also equivalent, when $l$ is even, to
\begin{equation}\label{e:0ellip}
\| u\|_{L^2_g(X)}^2 + \||\sigma|^{-l} \Delta_g^{l/2} u\|_{L^2_g(X)}^2.
\end{equation}
This equivalence identifies the $H^l_{0,|\sigma|^{-1}}$ spaces with the usual semiclassical Sobolev spaces based on $L^2_g(X)$.

To make the conversion from \eqref{e:vasyresest0} into an $H^s_{0,|\sigma|^{-1}}(\olX)$-estimate, we remark
that with $k+|\alpha|\leq l$,
$$
(x^{-1}\pa_x)^k \pa_y^\alpha \in
x^{-2k-|\alpha|}\Diff_0^l(\olX)\subset x^{-2l}\Diff_0^l(\olX),
$$
and similarly for the high energy spaces, so
$$
\|u\|_{H^l_{|\sigma|^{-1}}(\olX_{\even})}\lesssim \|x^{(n+1)/2-2l}u\|_{H^l_{0,|\sigma|^{-1}}(\olX)},
$$
where the shift of $(n+1)/2$ in the exponent is due to the different
normalization of the $L^2$-spaces, \eqref{eq:L2-rel}.
Thus, taking $s\geq 1$ integer, $s>1/2+\Gamma/2$, $\im\sigma>-\Gamma/2$, $\Gamma>0$, and simply using
$\|u\|_{L^2(\olX_{\even})} \le \|u\|_{H^s_{|\sigma|^{-1}}(\olX_{\even})}$, we deduce that
\begin{equation*}\begin{split}
\|x^{1-\im\sigma}\cR(\sigma)f\|_{L^2_g(X)}
&\le C \|x^{-(n-1)/2+i\sigma}\cR(\sigma)f\|_{L^2(\olX_{\even})}
\le C \|x^{-(n-1)/2+i\sigma}
\cR(\sigma)f\|_{H^s_{|\sigma|^{-1}}(\olX_{\even})}\\
&\le C
|\sigma|^{-1}\|x^{-(n+3)/2+i\sigma}f\|_{H^{s-1}_{|\sigma|^{-1}}(\olX_{\even})}
\le C
|\sigma|^{-1}\|x^{-2s+1+i\sigma}f\|_{H^{s-1}_{0,|\sigma|^{-1}}(\olX)} \\
&\le C
|\sigma|^{-1}\|x^{-2s+1-\im\sigma}f\|_{H^{s-1}_{0,|\sigma|^{-1}}(\olX)}.
\end{split}\end{equation*}
Notice that there is a loss of $x^{-2s}$ in the weight between the two
sides. Although this simple argument does not give an optimal
zero-Sobolev space estimate, to minimize losses take $3/2+\Gamma/2\geq s$,
and $-\Gamma/2<\im\sigma<-\Gamma/2+1/2$, so
$-2s+1-\im\sigma>-5/2-\Gamma/2$
\begin{equation}\label{e:vasyintermediate}\begin{split}
\|x^{1+\Gamma/2}\cR(\sigma)f\|_{L^2_g(X)}
&\le C \|x^{1-\im\sigma}\cR(\sigma)f\|_{L^2_g(X)}
\le C
|\sigma|^{-1}\|x^{-2s+1+i\sigma}f\|_{H^{s-1}_{0,|\sigma|^{-1}}(\olX)}\\
&\le C
|\sigma|^{-1}\|x^{-5/2-\Gamma/2}f\|_{H^{s-1}_{0,|\sigma|^{-1}}(\olX)}.
\end{split}
\end{equation}
Again using the ellipticity of $\Delta_g$ in the zero-calculus (as in \eqref{e:0ellip}), allows one to strengthen the norm on
the left hand side to
$\|x^{1+\Gamma/2}\cR(\sigma)f\|_{H^{s+1}_{0,|\sigma|^{-1}}(\olX)}$ -- this
simply requires a commutator argument or a parametrix with a smoothing (but not semiclassically
trivial) error. For example, to strengthen the norm to $\|x^{1+\Gamma/2}\cR(\sigma)f\|_{H^2_{0,|\sigma|^{-1}}(\olX)}$ we may write
\begin{equation}\label{e:resident}
\Delta_g x^{1+\Gamma/2}\cR(\sigma)f = x^{1+\Gamma/2} f + \left(\frac{(n-1)^2}4 + \sigma^2 \right)x^{1+\Gamma/2}\cR(\sigma)f + [\Delta_g, x^{1+\Gamma/2}]\cR(\sigma)f.
\end{equation}
Multiplying by $|\sigma|^{-2}$ and taking the $L^2_g(X)$ norm, we see that the first two terms are both controlled by the estimate \eqref{e:vasyintermediate}, while the last is bounded by 
\[
|\sigma|^{-1}\,\|x^{1+\Gamma/2}\cR(\sigma)f\|_{H^1_{0,|\sigma|^{-1}}(\olX)} \le \frac 12 \|x^{1+\Gamma/2}\cR(\sigma)f\|_{H^2_{0,|\sigma|^{-1}}(\olX)}.
\]
This implies that \eqref{e:vasyintermediate} holds with $L^2_g(X)$ norms replaced by $H^2_{0,|\sigma|^{-1}}$ norms, and iterating one can get as far as controlling the $H^{s+1}_{0,|\sigma|^{-1}}(\olX)$ norm (past which point the first term of \eqref{e:resident} is no longer controlled).

To pass from estimates in $H^{s+1}_{0,|\sigma|^{-1}}(\olX)$ to estimates in $L^2_g(X)$ we use a similar procedure. For $K>0$ fixed we have  the semiclassical elliptic estimate  
\begin{equation}\label{e:0ellipest}
\|u\|_{H^{s-1}_{0,|\sigma|^{-1}}}  \le C|\sigma|^{-2}\|x^a (\Delta_g + K^2|\sigma|^2)x^{-a}u \|_{H^{s-3}_{0,|\sigma|^{-1}}},
\end{equation}
 from which we deduce
\[\begin{split}
\|x^{1+ \Gamma/2}&\cR(\sigma)f\|_{H^{s-1}_{0,|\sigma|^{-1}}} \\&\le \|x^{1+ \Gamma/2}\cR(iK|\sigma|)f\|_{H^{s-1}_{0,|\sigma|^{-1}}} + |\sigma|^2(1 + K^2) \|x^{1+ \Gamma/2}\cR(\sigma)\cR(iK|\sigma|)f\|_{H^{s-1}_{0,|\sigma|^{-1}}} \\
&\le C |\sigma|^{-2}\|x^{1+ \Gamma/2}f\|_{H^{s-3}_{0,|\sigma|^{-1}}} +C|\sigma|\|x^{-5/2 - \Gamma/2}\cR(iK|\sigma|)f\|_{H^{s-1}_{0,|\sigma|^{-1}}} \\
&\le C |\sigma|^{-1}\|x^{-5/2 - \Gamma/2}f\|_{H^{s-3}_{0,|\sigma|^{-1}}}.
\end{split}\]
Note that although the resolvent we use is $\cR(\sigma) = (\Delta_g - (n-1)^2/4 - \sigma^2)^{-1}$, the shift by $(n-1)^2/4$ is not important when $|\sigma|$ is large and does not interfere with the application of \eqref{e:0ellipest}.
Iterating this
we obtain the more standardly phrased weighted estimate
\begin{equation*}
\|x^{1+\Gamma/2}\cR(\sigma)f\|_{H^{2}_{0,|\sigma|^{-1}}(\olX)}\le C |\sigma|^{-1}\|x^{-5/2-\Gamma/2}f\|_{L^2_g(X)},
\end{equation*}
which in turn implies \eqref{e:vasyresest}.
This is not optimal in terms of the weights which could be improved
using $\olX_{\even}$ derivatives to estimate weights in the spirit of
\cite[Chapter 4, Lemma~5.4]{taylor}, but this would result in a loss
in terms of $|\sigma|$.
\end{proof}

Another approach to obtaining polynomial boundedness of the resolvent and the semiclassical outgoing condition is possible in a special case. Let $X'_0=\BB^n$ with a metric which is asymptotically hyperbolic
in the following stronger sense:
\begin{equation}\label{e:ah}
g_0= g_{\HH^n}+\chi_\delta(x)\tilde g,
\ P_0=h^2\left(\Delta_{g_0}+x^2V_0-\frac{(n-1)^2}{4}\right)-\lambda_0,\ \lambda_0>0,
\end{equation}
where $g_{\HH^n}$ is the hyperbolic metric on $\BB^n$ and $\tilde g$ is a smooth
symmetric 2-cotensor on $\overline{\BB^n}$, $V_0\in\CI(\overline{\BB^n})$,
 $\chi_\delta(t)=\chi(t/\delta)$, $\chi\in\CI(\RR)$ supported
in $[0,1)$, identically $1$ near $0$, and $\delta>0$ sufficiently small.
This is the setting considered by
Melrose, S\'a Barreto
and the second author \cite{Melrose-SaBarreto-Vasy:Semiclassical}.
Note that, although we have $g_0=g_{\HH^n}$ on a large compact set,
the factor $\chi_\delta$ does not change $g_0$ near infinity. Thus, after possibly
scaling $x$, i.e.\ replacing it by $x/\eps$, in the region $x<4$ the cutoff
$\chi_\delta\equiv 1$.

It is shown in \cite{Melrose-SaBarreto-Vasy:Semiclassical} that the Schwartz kernel
of $(P_0-\lambda)^{-1}$ is
a semiclassical paired Lagrangian distribution, which is just a Lagrangian
distribution away from the diagonal associated to the flow-out of the diagonal by
the Hamilton vector field of the metric function, hence, as remarked at the beginning
of the section, $(P_0-\lambda)^{-1}$ is semiclassically outgoing. This also gives that
$(P_0-\lambda)^{-1}$ satisfies the bound in \eqref{eq:model-bound}
with $D = [-E,E]-i[0,\Gamma h]$ and with $a_0 = C/h$ for
arbitrary $\Gamma > 0, E \in (-\lambda_0,\lambda_0)$ with compactly supported cutoffs as a
consequence of a semiclassical version of \cite[Theorem 3.3]{gu}.
Moreover, it is also shown in \cite{Melrose-SaBarreto-Vasy:Semiclassical} that
the resolvent satisfies weaker polynomial bounds in weighted spaces, namely
$R_j(\lambda):x^a L^2\to x^{-b}L^2$,
$a,b>C$, with $a_0(h)=C'h^{-1 - (n-1)/2}$. It is highly likely that the better bound $a_0 = C/h$ could be shown for the weighted spaces in this way as well; this could be proved by extending
the approach of \cite{gu} in a manner that is uniform up to the boundary (i.e.\ infinity);
this is expected to be relatively straightforward. 
The same results hold without modification in the case where $X'_0$ is a disjoint union of balls with $g_0$ and $P_0$ of the form \eqref{e:ah} in each ball.

\section{Model operators for the trapped set}\label{s:trapped}
In this section we describe some examples in which the assumptions on the
model near the trapped set, $P_1$, are satisfied. The two main assumptions, polynomial boundedness of the resolvent \eqref{eq:model-bound} and the semiclassically outgoing property (1-OG), are the same as in the case of $P_0$ above, with the exception that the latter need only hold at points where the backward bicharacteristic is disjoint from any trapping in $X_1$ . 

In \S\ref{s:cpx} we prove that the semiclassically outgoing property
(1-OG) holds for polynomially bounded resolvents when either a complex
absorbing barrier is added near infinity (regardless of the cutoff or
weight and regardless of the type of infinite end), and in
\S\ref{s:euc} we prove it when infinity is Euclidean (with no complex
absorption added) and the resolvent is polynomially bounded and suitably cutoff or weighted. In \S\ref{s:wz}, \S\ref{s:nz}, \S\ref{s:ps} we give examples of assumptions on the trapped set which imply that the resolvent is polynomially bounded.

\subsection{Complex absorbing barriers}\label{s:cpx}
In this subsection we consider model operators of the form
\begin{equation}\label{e:abs}
P_1 = h^2\Delta_g + V(x) - i W,
\end{equation}
where $V \in C^\infty(X_1',\R)$ and $W \in C^\infty(X_1';[0,1])$ has $W = 0$ on $X_1$ and $W = 1$ off a compact set. Suppose that each backward bicharacteristic of $h^2\Delta_g + V(x)$ at energy $\lambda \in [-E,E]$ enters either the interior of $T^*\left[(X_1 \setminus X_0) \cup W^{-1}(1)\right]$ in finite time. The strong assumptions on $W$ remove the need for any further assumptions on $V$ or on the metric.

The function $W$ in \eqref{e:abs} is called a complex absorbing barrier and serves to suppress the effects of infinity. In Lemma \ref{l:w} we prove the needed semiclassical propagation of singularities in this setting, that is to say that $R_1(\lambda)$ is semiclassically outgoing in the sense of \S\ref{s:abstract}. After this, all that is needed to be in the setting of \S\ref{s:abstract} is the convexity condition \eqref{eq:convexity} and the resolvent estimate \eqref{eq:model-bound}. In \S\ref{s:wz} and  \S\ref{s:nz} we describe settings in which results of Wunsch-Zworski \cite{wz} and Nonnenmacher-Zworski \cite{Nonnenmacher-Zworski:Quantum, nz2} respectively give the needed bound \eqref{eq:model-bound}.

For the following lemma we use a positive commutator argument based on an escape function as in \cite{vz}, which is the semiclassical adaptation of the proof of \cite[Proposition 3.5.1]{h}. The only slight subtlety comes from the interaction of the escape function with the complex absorbing barrier $W$ and from the possibly unfavorable sign of $\im \lambda$, but the positive commutator with the self adjoint part of the operator overcomes these effects. See also \cite[Lemma A.2]{Nonnenmacher-Zworski:Quantum} for a similar result.

\begin{lem}\label{l:w}
Suppose that $P_1$ is as in \eqref{e:abs}. Let $U \subset T^*(X_1'\setminus (X\setminus X_0))$ be preserved by the backward bicharacteristic flow. If $u = u_h \in L^2(X_0\cap X_1)$ has $\|u\|_{L^2} = 1$ and
\begin{equation}\label{eq:lemabs0}\|\Op(a)(P_1 - \lambda)u\|_{L^2} = \Oh(h^\infty)\end{equation}
for all $a \in C_0^\infty(T^*X'_1)$ with support in $U$ and all $\lambda \in [-E,E]-i[0,\Gamma h]$, then for every $a \in C_0^\infty(T^*X'_1)$ with support in $U$ we have also
\begin{equation}\label{eq:lemabs}\|\Op(a) u\|_{L^2} = \Oh(h^\infty).\end{equation}
The implicit constants in $\Oh$  in \eqref{eq:lemabs0} and \eqref{eq:lemabs} are uniform for $\lambda \in [-E,E]-i[0,\Gamma h]$.
\end{lem}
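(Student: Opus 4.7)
The plan is a semiclassical positive commutator / escape function argument of Hörmander type, adapted to the complex absorbing term $-iW$, in the spirit of \cite{vz}; a closely related version appears in \cite[Lemma~A.2]{Nonnenmacher-Zworski:Quantum}.

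Fix $q_0 \in U$; it suffices to show $q_0 \notin \WFh(u)$. A preliminary microlocal ellipticity step handles the easy cases: if $q_0$ lies off the characteristic set of $P_s := h^2\Delta_g + V$, or if $W(q_0) > 0$, then the full semiclassical symbol of $P_1 - \lambda = P_s - \lambda - iW$ is bounded away from zero at $q_0$ uniformly for $\lambda \in [-E,E]-i[0,\Gamma h]$ and $h$ small (using $\im\lambda = \Oh(h)$). Semiclassical ellipticity applied to $\Op(a)(P_1 - \lambda)u = \Oh(h^\infty)$ then yields $\Op(a) u = \Oh(h^\infty)$ for $a$ supported in a small neighborhood of $q_0$ inside $U$. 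So we may reduce to the case $p_s(q_0) = \re\lambda$ and $W \equiv 0$ near $q_0$. By the global nontrapping hypothesis preceding the lemma, the backward $P_s$-bicharacteristic $\gamma_-$ from $q_0$ enters $W^{-1}(1)$ in finite time: the other alternative, entering $T^*(X_1\setminus X_0) \subset T^*(X\setminus X_0)$, is excluded by backward-invariance of $U$ together with $U \cap T^*(X\setminus X_0) = \emptyset$. Moreover, the entire arc of $\gamma_-$ from $q_0$ up to its first entry into $W^{-1}(1)$ lies inside $U$.

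Using the standard construction of \cite[Prop.~3.5.1]{h}, I build a nonnegative escape function $b \in C_0^\infty(T^*X_1')$ supported in a thin tubular neighborhood of this arc inside $U$, with $b(q_0) > 0$ and
\begin{equation*}
H_{p_s}(b^2) = -c_0^2 + c_1^2,
\end{equation*}
where $c_0$ is elliptic at $q_0$ and satisfies $c_0^2 \geq \alpha b^2$ for an arbitrarily large constant $\alpha$, and $\supp c_1$ lies near the first-entry point of $\gamma_-$ into $W^{-1}(1)$. Setting $B = \Op(b)$, a direct calculation gives the identity
\begin{equation*}
\langle \tfrac{i}{h}[P_s, B^*B]u, u\rangle = \tfrac{2}{h}\im\langle (P_1 - \lambda)u, B^*Bu\rangle + \tfrac{1}{h}\langle (WB^*B + B^*BW)u, u\rangle + \tfrac{2\im\lambda}{h}\langle B^*Bu, u\rangle.
\end{equation*}
The left-hand side expands via semiclassical symbol calculus as $\|\Op(c_1)u\|^2 - \|\Op(c_0)u\|^2 + \Oh(h)\|u\|^2$; on the right, the $W$-term is $\Oh(1)\|Bu\|^2$ by sharp Gårding (since $W, b^2 \geq 0$), the $\im\lambda$-term is $\Oh(\Gamma)\|Bu\|^2$ because $\im\lambda = \Oh(h)$, and the $(P_1-\lambda)u$-term is $\Oh(h^\infty)$ by Cauchy--Schwarz and the hypothesis after inserting a cutoff supported in $U$ adjacent to $(P_1 - \lambda)u$. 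Choosing $\alpha$ larger than the resulting constants absorbs the $\|Bu\|^2$ contributions into $\|\Op(c_0)u\|^2$, leaving
\begin{equation*}
\|\Op(c_0)u\|^2 \leq \|\Op(c_1)u\|^2 + \Oh(h^\infty).
\end{equation*}

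By the preliminary ellipticity step applied on $\supp c_1 \subset W^{-1}(1) \cap U$, the right-hand side is $\Oh(h^\infty)$, so $\Op(c_0)u = \Oh(h^\infty)$; ellipticity of $c_0$ at $q_0$ then gives $q_0 \notin \WFh(u)$. Uniformity of the $\Oh(h^\infty)$ bound in $\lambda \in [-E,E]-i[0,\Gamma h]$ follows from uniformity of the symbol-level estimates throughout. The main technical subtlety is the joint handling of the $W$- and $\im\lambda$-terms, both of which appear at order $\Oh(1)\|Bu\|^2$ rather than $\Oh(h^\infty)$; these are absorbable only because the escape function can be arranged with $c_0^2 \geq \alpha b^2$ for any prescribed $\alpha$. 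The smallness $\im\lambda = \Oh(h)$ is essential here, as otherwise the $\im\lambda$-contribution would be of order $h^{-1}\|Bu\|^2$ and could not be absorbed into the principal commutator.
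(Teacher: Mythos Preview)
Your approach is essentially the same as the paper's---ellipticity reduction, then a positive-commutator/escape-function argument along a backward bicharacteristic segment terminating in the absorbing region---and the overall structure is sound. However, there is a genuine gap: the passage from the commutator expansion $\|\Op(c_1)u\|^2 - \|\Op(c_0)u\|^2 + \Oh(h)\|u\|^2$ to your claimed conclusion $\|\Op(c_0)u\|^2 \le \|\Op(c_1)u\|^2 + \Oh(h^\infty)$ is not justified. The subprincipal remainder in the symbol calculus is $\Oh(h)$ (localized to $\supp b$, but still only $\Oh(h)$), not $\Oh(h^\infty)$. Since $\|u\|=1$, a single application of your estimate yields only $\|\Op(c_0)u\| = \Oh(h^{1/2})$. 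To reach $\Oh(h^\infty)$ one must iterate with a nested family of escape functions $b_j$, arranged so that the localized $\Oh(h)$ error at stage $j$ is supported where $c_0^{(j+1)}$ is elliptic; each step then gains a half-power of $h$. The paper makes this explicit at the end of its Step~3, referring to \cite[Lemma~2]{d} for the details of the iteration.

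A secondary point: your handling of the $W$-term as ``$\Oh(1)\|Bu\|^2$ by sharp G\aa rding'' is looser than necessary and misses a favorable sign. Writing $B^*BW = B^*WB + B^*[B,W]$, the first piece gives $-\tfrac{2}{h}\langle Bu, WBu\rangle \le 0$ (since $W\ge 0$) and can simply be dropped; the commutator $[B,W]$ has purely imaginary principal symbol of order $h$, so $\re\langle u, B^*[B,W]u\rangle$ is $\Oh(h^2)$ localized. Thus the $W$-term does not need to be absorbed by choosing $\alpha$ large---it already has the right sign modulo a lower-order error that feeds into the same iteration. Only the $\im\lambda$-term genuinely requires the absorption mechanism, and there the paper arranges $b^2 \ge 4\Gamma q^2$ directly.
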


Note that in view of Definition \ref{Def:outgoing}, this lemma implies assumption (0-OG)

\begin{proof}
In this proof all norms are $L^2$ norms. In the first step we use ellipticity to reduce to a neighborhood of the energy surface, and then a covering argument to reduce to a neighborhood of a single bicharacteristic segment. In the second step we construct an escape function (a monotonic function) along this segment. In the third step we implement the positive commutator method. Let
\[
p = |\xi|_g^2 + V(x), \qquad p_1 = p - iW.
\]

\textbf{Step 1.} Observe first that for any $\delta > 0$, we can find $R_\delta(\lambda)$, a semiclassical elliptic inverse for $P_1$ on the set $\{|p_1-\lambda|>\delta\}$, such that
\[\|\Op(a) u\| = \|\Op(a) R_\delta(\lambda)(P_1 - \lambda)u\| + \Oh(h^\infty)\]
as long as $\supp a \subset \{|p_1-\lambda|>\delta\}$. Since by the semiclassical composition formula the operator $\Op(a) R_\delta(\lambda)$ is the quantization of a compactly supported symbol with support contained in $\supp a$, plus an error of size $\Oh(h^\infty)$ (as an operator $L^2 \to L^2$), we have the lemma for $a$ with $\supp a \subset \{|p_1-\lambda|>\delta\}$. It remains to study $a$ with $\supp a \subset \{|p_1-\lambda|< 2\delta\}$. Note that this is a precompact set for $\delta$ small, because of the condition that $W = 1$ off of a compact set.

Now fix $a_0\in C_0^\infty(T^*X_1)$ for which we wish to prove \eqref{eq:lemabs}. Take $U_0$ with $\overline{U_0} \subset U$ such that $U_0$ is preserved by the backward flow and $\supp a_0 \subset U_0$. For each $\zeta \in \overline {U_0} \cap p_1^{-1}(\lambda)$ put 
\[T_\zeta \Def \sup\{t; t<0, \Phi^t\zeta \in T^*W^{-1}([2\eps,1])\},\]
where $\Phi^t$ is the flow of the Hamiltonian vector field of $p$ at time $t$, and $\eps > 0$ will be specified later. The supremum is taken over a nonempty set, since each backward bicharacteristic of $p$ was assumed to enter either $T^*(W^{-1}(1))$ or $T^*(X_1 \setminus X_0)$ in finite time, and the second possibility is ruled out by the assumption on $U$.
We will prove the lemma for $a$ which are supported in a sufficiently small neighborhood $V_\zeta$ of $\{\Phi^t\zeta; T_\zeta \le t \le 0\}$. This gives the full lemma because if $\delta$ small enough these neighborhoods together with $\{|p_1-\lambda|>\delta\}$ cover all of $\supp a_0$.

\textbf{Step 2.} To do this we take a tubular neighborhood $U_\zeta \subset U_0$ of $\{\Phi^t\zeta; T_\zeta \le t \le 0\}$, that is a neighborhood of the form
\begin{equation}\label{eq:uzeta}U_\zeta = \bigcup_{-\eps_\zeta + T_\zeta < t < \eps_\zeta}\Phi^t(\Sigma_\zeta \cap U_\zeta),\end{equation}
where $\Sigma_\zeta \subset T^*X$ is a hypersurface transversal to the bicharacteristic through $\zeta$, and $U_\zeta$ and $\eps_\zeta$ are small enough that
\[\bigcup_{-\eps_\zeta + T_\zeta< t < T_\zeta +  \eps_\zeta}\Phi^t(\Sigma_\zeta \cap U_\zeta) \subset T^*W^{-1}([\eps,1]),\]
and also small enough that the map $U_\zeta \to (-\eps_\zeta + T_\zeta, \eps_\zeta) \times( \Sigma_\zeta \cap U_\zeta)$ defined by \eqref{eq:uzeta} is a diffeomorphism. We now use these `product coordinates' to define an escape function as follows. Take
\begin{itemize}
\item $\varphi_\zeta \in C_0^\infty(\Sigma_\zeta \cap U_\zeta;[0,1])$ with $\varphi_\zeta = 1$ near $\zeta$, and
\item $\chi_\zeta \in C_0^\infty((-\eps_\zeta + T_\zeta, \eps_\zeta);([0,\infty))$ with $\chi_\zeta' \le -1$ near $[T_\zeta,0]$ and $\chi_\zeta' \le -2\Gamma \chi_\zeta$ on $[T_\zeta,\eps_\zeta]$.
\end{itemize}
The constant $\Gamma$ above is the same as the one in the statement of the lemma. Put
\[q_\zeta \Def \varphi_\zeta \chi_\zeta \in C_0^\infty(T^*X_1'), \qquad \{p,q_\zeta\} = \varphi_\zeta \chi'_\zeta,\]
and let $V_\zeta$ be a neighborhood of $\{\Phi^t\zeta; T_\zeta \le t \le 0\}$ in which $\chi'_\zeta \le -1$ and $\chi'_\zeta \le -2\Gamma\chi_\zeta$.
Take $b \ge 0$ such that
\begin{equation}\label{e:bdef}
b^2 = -\{p,q_\zeta^2\} + r, \qquad r \in C_0^\infty(T^*W^{-1}([\eps,\infty))),
\end{equation}
if necessary redefining $\chi_\zeta$ so that $b$ is smooth. By taking $r$ large, we may ensure that
\begin{equation}\label{e:bbig}b^2 \ge 4 \Gamma q_\zeta^2\end{equation}
everywhere. Note that \eqref{e:bbig} follows from
\[-\{p,q_\zeta^2\} = -2\varphi_\zeta^2\chi_\zeta\chi_\zeta' \ge 4\Gamma\varphi_\zeta^2\chi_\zeta^2 = 4 \Gamma q_\zeta^2\]
on $V_\zeta$.

\textbf{Step 3.} Put $Q = \Op(q_\zeta)$, $B = \Op(b)$, $R = \Op(r)$. Now
\[B^*B = \frac i h [Q^*Q,P] + R + hE,\]
where $P = h^2\Delta_g + V$ and where the error $E= \Op(e)$ has $\supp e \subset \supp q_\zeta$. We have
\[\|Bu\|^2 = \frac i h \la u, [Q^*Q,P] u\ra + \la u, R u \ra + h \la u, E u \ra \le \frac i h \la u, [Q^*Q,P] u\ra + h \|E u\| + \Oh(h^\infty),\]
where we used $\|u\| = 1$ (which was an assumption) and $\|R u\| = \Oh(h^\infty)$ (which follows from Step 1 above, since $R = \Op(r)$ and $r \in C_0^\infty(T^*W^{-1}([\eps,\infty)))$.
Next
\[\begin{split} \frac i h \la u, [Q^*Q,P] u\ra  &= -\frac 2 h \im \la u, Q^*Q(P_1-\lambda) u\ra - \frac 2 h \re\la u, Q^*QW u\ra - \frac 2 h \la u, Q^*Q \im \lambda u\ra\\
&\le - \frac 2 h \re \la u, Q^*[Q,W] u\ra - \frac 2 h \la u, Q^*Q \im \lambda u\ra + \Oh(h^\infty) \end{split},
\]
where we used $\la u, Q^*W Q u\ra \ge 0$ and $\|Q(P_1- \lambda) u\| = \Oh(h^\infty)$ (see \eqref{eq:lemabs0}). We will now show
\begin{equation}\label{e:sub}-\re \la u, Q^*([Q,W] + Q \im \lambda) u\ra \le \frac h 4 \|Bu\|^2 + \Oh(h^2) \|E' u\| + \Oh(h^\infty),\end{equation}
with $E' = \Op(e')$ with $\supp e' \subset \supp q$. Then we will have
\[\|Bu\|^2 \le \Oh(h)(\|E u\| + \|E' u\|) + \Oh(h^\infty),\]
after which an iteration argument, for example as in \cite[Lemma 2]{d}, shows that $\|Bu\| = \Oh(h^\infty)$ allowing us to conclude. The iteration argument involves taking a nested sequence of escape functions $q^j_\zeta$, with corresponding functions $b^j$ as in \eqref{e:bdef}  such that $\supp q^j_\zeta$ is contained in the set where $b^j$ is elliptic (bounded away from $0$). This allows us to show that if $\|Q^j u\| \le C h^k \|u\|$, then $\|B^j u\| \le C h^{k+1/2} \|u\|$.

The estimate \eqref{e:sub} is the slight subtlety discussed in the paragraph preceding the statement of the lemma.
Because $Q^*$ has real principal symbol of order $1$, and $[Q,W]$ has imaginary principal symbol of order $h$, we have
\[|\re \la u, Q^*[Q,W] u\ra| =  \Oh(h^2) \|E' u\| + \Oh(h^\infty),\]
with $E'' = \Op(e'')$ with $\supp e'' \subset \supp q$. Meanwhile
\[\begin{split} \la u,(B^*B + 4h^{-1}\im \lambda Q^*Q)u\ra &\ge \la u, (B^*B - 4\Gamma Q^*Q)u\ra + \Oh(h^\infty) \\
&\ge -C' h\|E'''u\|^2 + \Oh(h^\infty), \end{split}\]
with $E''' = \Op(e''')$ with $\supp e''' \subset \supp q$.
For the second inequality we used the sharp G\aa rding inequality. Indeed, the semiclassical principal symbol of $B^*B - 4\Gamma Q^*Q$ is $b^2 - 4\Gamma q_\zeta^2$, and we may apply \eqref{e:bbig}.
\end{proof}

\subsection{Euclidean ends}\label{s:euc} The model operator near the trapped set is  of the form
\[P_1 = h^2\Delta_g -1\]
off of a compact set $K'$ (which may contain $X_1$) and $(X_1',g)$ is isometric to Euclidean space there. Suppose that each backward bicharacteristic of $h^2\Delta_g$ at energy $\lambda \in [-E,E]$ which enters $T^*(X_1 \cap X_0)$ also enters either $T^*(X_1 \setminus X_0)$ or $T^*(X'_1 \setminus K')$.

In this case, similarly to \S\ref{s:aeinfinity},
the semiclassically outgoing
condition, which is only needed in the Euclidean region
(i.e.\ with backward bicharacteristic disjoint from $K'$), can be proved in several ways. One way is to use an escape function and positive commutator estimate as in Lemma \ref{l:w}: see \cite[Lemma 2]{d} for a complete proof in a more general setting, based on the construction and estimates of \cite{vz}. Another way, which we only outline here, is to show the (off-diagonal)
semiclassical FIO nature of $(P_1-\lambda)^{-1}$ in this region, with Lagrangian given by
the flow-out of the diagonal.
But this follows from the usual parametrix identity, taking some $\chi\in\CI_0(X_1')$
identically $1$ on the compact set, using $G=(1-\chi)\tilde
R_0(\lambda)(1-\chi)$ as the parametrix,
with $\tilde R_0(\lambda)$ the Euclidean resolvent.
Indeed, first for $\im \lambda>0$,
$(P_1-\lambda)G=\Id+E_R$, $G(P_1-\lambda)=\Id+E_L$, with $E_R$ and $E_L$
having Schwartz kernels with support in the left, resp.\ right factor in
$\supp \chi$
(e.g.\ $E_R=-\chi-[P_1,\chi]\tilde R_0(\lambda)$),
so
$$
(P_1-\lambda)^{-1}=G-GE_R+E_L(P_1-\lambda)^{-1}E_R;
$$
this identity
thus also holds for
the analytic continuation. Now, even for the analytic continuation, $G$,
$E_L$ and $E_R$ are semiclassical Lagrangian distributions
away from the diagonal as follows from the explicit formula (where $\im\lambda$
is $\Oh(h)$), and if a point
is in the image of the
wave front relation of $G\chi_0$ or $E_L$ (with $\chi_0$ compactly supported,
identically $1$ on $\supp\chi$) then it is on the forward bicharacteristic
emanating from a point in $T^*\supp \chi_0$,
proving the semiclassically outgoing property of the second and third term
of the parametrix identity.

\subsection{Normally hyperbolic trapped sets}\label{s:wz}

We take these conditions from \cite{wz}. Let $(X_1',g)$ be a manifold which is Euclidean outside of a compact set, let $V \in C_0^\infty(X_1,\R)$, and let
\[P_1 = h^2 \Delta_g + V - 1 - iW,\]
with $W$ as in \eqref{e:abs} and $\supp V \cap \supp W = \emptyset$.

Define the backward/forward trapped sets by
\[\Gamma_{\pm} = \{\zeta \in T^*X_1\colon \mp t \ge 0 \Rightarrow \Phi^t(\zeta) \notin \supp W\},\]
where again $\Phi^t(\zeta) = \exp(tH_p)(\zeta)$.
The trapped set is
\[K \Def \Gamma_+ \cap \Gamma_-.\]
We also define
\[\Gamma_\pm^\lambda = \Gamma_\pm \cap p^{-1}(\lambda), \qquad K_\lambda = K \cap p^{-1}(\lambda).\]

Assume
\begin{enumerate}
\item There exists $\delta > 0$ such that $dp \ne 0$ on $p^{-1}((-\delta,\delta))$.
\item $\Gamma_{\pm}$ are codimension one smooth manifolds intersecting transversely at $K$.
\item The flow is hyperbolic in the normal directions to $K$ in $p^{-1}((-\delta,\delta))$: there exist subbundles $E^\pm_\lambda$ of $T_{K_\lambda}(\Gamma^\lambda_\pm)$ such that 
\[T_{K_\lambda}\Gamma^\lambda_\pm = T K_\lambda \oplus E^\pm_\lambda,\]
where 
\[d \Phi^t \colon E_\lambda^{\pm} \to E_\lambda^{\pm},\]
and there exists $\theta > 0$ such that for all $|\lambda| <\delta,$
\[\|d \Phi^t(v)\| \le C e^{-\theta|t|}\|v\| \textrm{ for all } v \in E^{\mp}_\lambda, \pm t \ge 0.\]
\end{enumerate}
Here and below, by $d \Phi^s$ we mean the differential of
$\Phi^s=\Phi^s(\zeta')$ as a function of $\zeta'$.

This is the normal hyperbolicity assumption which we take from \cite[\S1.2]{wz}. This type of trapping appears in the setting of a slowly rotating Kerr black hole. Under these assumptions we have, from \cite[(1.1)]{wz},
\[\|(P_1 - \lambda)^{-1}\|_{L^2(X'_1) \to L^2(X_1')} \le Ch^{-N},\]
for $\lambda \in [-E,E]-i[0,\Gamma h]$, for some $E,\Gamma, N >0$. In particular, all the assumptions on $P_1$ and $X_1'$ in \S\ref{s:abstract} are satisfied.

\subsection{Trapped sets with negative topological pressure at $1/2$}\label{s:nz}
We take these conditions from \cite{Nonnenmacher-Zworski:Quantum}. Let $(X_1',g)$ be a manifold which is Euclidean outside of a compact set, let $V \in C_0^\infty(X_1,\R)$, and let
\[P_1 = h^2 \Delta_g + V - 1.\]

Let $K_0$ denote the set of maximally extended null-bicharacteristics of $P_1$ which are precompact. We assume that $K_0$ is hyperbolic in the sense that for any $\zeta \in K_0$, the tangent space to $p^{-1}(0)$ (the energy surface) at $\zeta$ splits into flow, unstable, and stable subspaces \cite[Definition 17.4.1]{kh}:
\begin{enumerate}
	\item $T_\zeta(p^{-1}(0)) = \R H_p(\zeta) \oplus E^+_\zeta \oplus E^-_\zeta, \qquad \dim E^\pm_\zeta = \dim X-1$.
	\item $d \Phi^t (E^\pm_\zeta) = E^\pm_{\Phi^t(\zeta)}, \qquad \forall t \in \R$.
	\item $\exists \theta > 0, \|d \Phi^t (v) \| \le C e^{-\theta |t|} \|v\|, \qquad \forall v \in E^\mp_\zeta, \pm t \ge 0.$
\end{enumerate}
Here again $\Phi^t(\zeta)\Def \exp(tH_p)(\zeta)$. This condition is satisfied in the case where $X$ is negatively curved near $K_0$.

The unstable Jacobian $J^u_t(\zeta)$ for the flow at $\zeta$ is given by
\[J^u_t(\zeta) = \det \left(d \Phi^{-t}(\Phi^t(\zeta))|_{E^+_{\Phi^t_\zeta}}\right).\]

We now define the topological pressure $P(s)$ of the flow on the trapped set, following \cite[\S3.3]{Nonnenmacher-Zworski:Quantum} (see also \cite[Definition 20.2.1]{kh}). We say that a set $S \subset K_0$ is $(\eps,t)$ separated if, given any $\zeta_1,\zeta_2 \in S$, there exists $t' \in [0,t]$ such that the distance between $\Phi^{t'}(\zeta_1)$ and $\Phi^{t'}(\zeta_2)$ is at least $\eps$. For any $s \in \R$ define
\[Z_t(\eps,s) \Def \sup_S \sum_{\zeta \in S} (J^u_t(\zeta))^s,\]
where the supremum is taken over all sets $S \subset K_0$ which are $(\eps,t)$ separated. The pressure is then defined as 
\[\mathcal{P}(s) = \lim_{\eps \to 0} \limsup_{t \to \infty} \frac 1 t \log Z_t(\eps,s).\]
The crucial assumption on the dynamics of the bicharacterstic flow on the trapped set is that
\[\mathcal{P}(1/2) < 0.\]
Then from \cite[Theorem 3]{Nonnenmacher-Zworski:Quantum} and \cite[(1.5)]{nz2} we have for any $\Gamma < |P(1/2)|$ and $\chi \in C_0^\infty(X_1')$, there exist $C,E,N>0$ such that
\[\|\chi(P_1 - \lambda)^{-1}\chi\|_{L^2 \to L^2} \le Ch^{-1-N|\im \lambda|/h}\log(1/h),\]
for $\lambda \in [-E,E] - i[0,\Gamma h]$. In particular, all the assumptions on $P_1$ and $X_1'$ in \S\ref{s:abstract} are satisfied.

\subsection{Convex obstacles with negative abscissa of absolute convergence}\label{s:ps}

We take these conditions from \cite{ps}. Let $(X_1',g) = \R^n \setminus O$, where $g$ is the Euclidean metric and where $O = O_1 \cup \cdots \cup O_{k_0}$ is a union of disjoint convex bounded open sets with smooth boundary, and let
\[P_1 = h^2\Delta_g - 1\]
with Dirichlet boundary conditions and. Assume that the $O_j$ satisfy the no-eclipse condition: namely that for each pair $O_i \ne O_j$ the convex hull of $\overline{O_i}$ and $\overline{O_j}$ does not intersect any other $\overline{O_k}$.

In this setting having negative topological pressure at $1/2$ is equivalent to having negative abscissa of convergence of a certain dynamical zeta function, a condition under which a holomorphic continuation to strip of a polynomially bounded cutoff resolvent was first obtained by Ikawa \cite{i}. To define this, for $\gamma$ a primitive periodic reflecting ray with $m_\gamma$ reflections, let $T_\gamma$ be the length of $\gamma$ and $P_\gamma$ the associated linear Poincar\'e map. Let $\lambda_{i,\gamma}$ for $i = 1, \dots, n-1$ be the eigenvalues of $P_\gamma$ with $|\lambda_{i,\gamma}| > 1$. Let $\mathcal{P}$ be the set of primitive periodic rays. Set
\[\delta_\gamma = - \frac 12 \log(\lambda_{1,\gamma}\cdots \lambda_{n-1,\gamma}), \quad\gamma \in \mathcal{P}.\]
Let $r_\gamma = 0$ if $m_\gamma$ is even and $r_\gamma = 1$ if $m_\gamma$ is odd. The dynamical zeta function is given by
\[Z(s) = \sum_{m=1}^\infty \frac 1 m \sum_{\gamma \in \mathcal{P}} (-1)^{mr_\gamma} e^{m(-sT_\gamma + \delta_\gamma)},\]
and the abscissa of convergence is the minimal $s_0 \in \R$ such that the series is absolutely convergent for $\re s > s_0$. Assume that
\[s_0 < 0.\]

For simplicity, assume in addition that $n=2$. This assumption can be replaced by another which is weaker and more dynamical but also more complicated: see \cite[Theorem 1.3]{ps} for a better statement.  Then from \cite[Theorem 1.3]{ps} we have for any $\chi \in C_0^\infty(X_1')$
\[\|\chi(P_1 - \lambda)^{-1}\chi\|_{L^2 \to L^2} \le Ch^{-N}\]
for $\lambda \in [-E,E]-i[0,\Gamma h]$, for some $N,E,m,C >0$ and $\Gamma > |s_0|$. In particular, all the assumptions on $P_1$ and $X_1'$ in \S\ref{s:abstract} are satisfied.

\section{Applications}\label{s:apps}

We now give an improved version of Theorem \ref{t:intro}.

\begin{thm} Let $(X,g)$ be even and asymptotically hyperbolic, let $V \in C_0^\infty(X;\R)$, and let
\[
P = h^2 \Delta_g + V - 1, \qquad p = |\xi|^2_g + V - 1.
\]
\begin{enumerate}
	\item Suppose for some $E>0$ $P$ has a normally hyperbolic trapped set on $p^{-1}[-E,E]$ in the sense of \S\ref{s:wz}. Then there exist $h_0,N,\Gamma, C >0$ such that
	\[\|x^{5/2+\Gamma/2} R(\lambda)x^{5/2 + \Gamma/2}\|_{L^2 \to L^2} \le C h^{-N}\]
	for $\lambda \in [-E,E]-i[0,\Gamma h]$ and $0 < h \le h_0$.
	\item Suppose $P$ has a hyperbolic trapped set on $p^{-1}(0)$with $\mathcal{P}(1/2)<0$ as in \S\ref{s:nz}. Then for any $\Gamma < |\mathcal{P}(1/2)|$ there exist $E,h_0,N,C>0$ such that
	\[\|x^{5/2+\Gamma/2} R(\lambda)x^{5/2 + \Gamma/2}\|_{L^2 \to L^2} \le C h^{-N}\]
	for $\lambda \in [-E,E]-i[0,\Gamma h]$ and $0 < h \le h_0$.
	\item Let
	\[(\tilde X,g) = (\R^2, dr^2 + f(r)d\theta^2)\]
	with $f \in C^\infty((0,\infty);(0,\infty))$ has $f(r) = r^2$ for $r$ sufficiently small, $f(r) = \sinh^2(r)$ for $r$ sufficiently large, and $f'(r) > 0$ for all $r$. Let $X = \tilde X \backslash O$ where $O$ is a union of disjoint convex open sets all contained in the region where $f(r) = r^2$ , satisfying the no-eclipse condition, with abscissa of convergence $s_0 < 0$ as in \S\ref{s:ps}, and with Dirichlet boundary conditions imposed for $P = h^2\Delta_g - 1$. Then there exist $E,h_0,N > 0$ and $\Gamma > |s_0|$ such that
	\[\|x^{5/2+\Gamma/2} R(\lambda)x^{5/2 + \Gamma/2}\|_{L^2 \to L^2}\le C h^{-N}\]
	for $\lambda \in [-E,E]-i[0,\Gamma h]$ and $0 < h \le h_0$.
\end{enumerate}
\end{thm}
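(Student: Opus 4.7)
My plan is to apply Theorem \ref{t:main} in each of the three cases, taking $P_0$ to be a pure asymptotically hyperbolic model at infinity and $P_1$ a model adapted to the particular trapping assumption. Throughout I take $X_0 = \{0 < x < 4\}$ and $X_1 = \{x > 1\}$, with the bicharacteristic convexity \eqref{eq:convexity} on $X_0$ supplied by the lemma of \S\ref{s:ahinfinity} (after possibly rescaling $x$ so that convexity extends up to $\{x < 4\}$; cf.\ Remark~\ref{rem:constants}). For $P_0$ in cases~(1) and~(2), choose a nontrapping even asymptotically hyperbolic metric $g_0$ on $\olX$ that agrees with $g$ on $X_0$ and set $P_0 = h^2\Delta_{g_0}-1$; in case~(3), take $X_0' = \BB^2$ with the extended asymptotically hyperbolic metric supplied by the form of $g$ in the radial coordinate $r$. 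The weighted bound \eqref{e:vasyresest}, together with the boundedness of $x$, allows the use of the symmetric weight $\rho_0 = x^{5/2+\Gamma/2}$ and yields $a_0 = C/h$, while (0-OG) follows from the semiclassical FIO description of the resolvent kernel away from the diagonal given in \cite{v1, v2}.

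For $P_1$ in cases~(1) and~(2), take $X_1' = X$ and $P_1 = P - iW$ with $W \in C^\infty(X;[0,1])$ supported in $\{x < 1/2\}$, vanishing on a neighborhood of the forward and backward flow-out of the trapped set in $X_1$, and equal to $1$ near $\D\olX$; this places us in the setting of \S\ref{s:cpx}. The polynomial bound \eqref{eq:model-bound} is then supplied by \cite{wz} in case~(1) and by \cite{Nonnenmacher-Zworski:Quantum, nz2} in case~(2), giving $a_1 \le Ch^{-N}$ on $[-E,E]-i[0,\Gamma h]$; the outgoing property (1-OG) follows from Lemma~\ref{l:w}, since any backward $P$-bicharacteristic from a point of $T^*(X_0 \cap X_1)$ that avoids the trapping eventually meets $\{W=1\}$. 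Bicharacteristic convexity of $X_1$ in $X_1'$ holds because $W \equiv 0$ on $X_1$ and the level sets of $x$ are convex for $x \in (0,4)$. In case~(3), take instead $X_1' = \R^2 \setminus O$ with the Euclidean metric and $P_1 = h^2\Delta - 1$ with Dirichlet conditions, a model that agrees with $P$ on $X_1$ because $g$ is Euclidean in the region containing the obstacles. The estimate \eqref{eq:model-bound} is \cite[Theorem~1.3]{ps}, the outgoing property is provided by \S\ref{s:euc}, and bicharacteristic convexity of $X_1$ in $X_1'$ is immediate because Euclidean geodesics outside $O$ are straight lines and cannot re-enter $X_1$ after leaving.

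With these inputs Theorem~\ref{t:main} gives
\[\|\rho R(\lambda)\rho\|_{L^2 \to L^2} \le Ch^2 a_0^2 a_1 \le Ch^{-N}\]
on the intersection $[-E,E]-i[0,\Gamma h]$ of the admissible strips of $R_0$ and $R_1$, where $\rho$ equals $x^{5/2+\Gamma/2}$ on $X_0$ and $1$ elsewhere. Since $x$ is bounded above on $X$ and bounded below on $X \setminus X_0$, this is equivalent up to a multiplicative constant to the stated estimate with symmetric weight $x^{5/2+\Gamma/2}$ on both sides. I expect the main obstacle to be primarily bookkeeping rather than any deep analytic step: converting the asymmetric weights in \eqref{e:vasyresest} into the symmetric weight required by Theorem~\ref{t:main}, verifying bicharacteristic convexity of $X_1$ in $X_1'$ in each of the three model choices, and aligning the three values of $\Gamma$ coming from $R_0$, from the trapping estimates, and from the ambient strip under discussion.
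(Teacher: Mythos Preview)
Your overall strategy is correct and matches the paper's: apply Theorem~\ref{t:main} with $P_0$ the asymptotically hyperbolic model of \S\ref{s:ahinfinity} and $P_1$ chosen so that the trapping estimates of \S\ref{s:wz}, \S\ref{s:nz}, \S\ref{s:ps} apply. Case~(3) and the treatment of $P_0$ are fine.

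There is, however, a gap in your choice of $X_1'$ for cases~(1) and~(2). You take $X_1'=X$ (the original asymptotically hyperbolic manifold) with a complex absorbing barrier near $\partial\olX$, and then cite \cite{wz} and \cite{Nonnenmacher-Zworski:Quantum,nz2} for the polynomial bound on $R_1(\lambda)$. But those results, as recorded in \S\ref{s:wz} and \S\ref{s:nz}, are stated for manifolds that are \emph{Euclidean outside a compact set}; moreover \cite{Nonnenmacher-Zworski:Quantum,nz2} do not use a complex absorbing barrier at all. So your citations do not match your setup. It is plausible that once $W\equiv 1$ near infinity the end geometry is irrelevant and the proofs go through, but this is an additional argument, not a citation, and for case~(2) you would also need to transplant the pressure-gap estimate into the absorbing-barrier framework.

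The paper avoids this by taking $X_1'$ to be a manifold with Euclidean ends agreeing with $(X,g,V)$ on $X_1$; this is possible because $X_1=\{x>1\}$ is precompact in $X$, so one can glue on a Euclidean end outside $X_1$. With this choice the trapping results apply verbatim, \S\ref{s:cpx} gives (1-OG) in case~(1) and \S\ref{s:euc} gives it in case~(2), and bicharacteristic convexity of $X_1$ in $X_1'$ follows as before. This is a minor repair: the architecture of your argument is right, but the model $P_1$ must be chosen so that the black-box trapping estimates are literally applicable.
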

Note that in case (3) we assume $f'>0$ to rule out geometric trapping and to guarantee \eqref{eq:convexity}.  Here $\exp(-(1+r^2)^{1/2})$, perhaps multiplied by a suitable large constant prefactor, playes the role of the boundary defining function $x$. The set $X_1$ encompasses the whole region where $f(r) \not\equiv \sinh^2(r)$, and the set $X_0$ is contained in the region where $f(r) = \sinh^2(r)$.

The theorem follows immediately from the main theorem, Theorem~\ref{t:main},
together with \S\ref{s:ahinfinity} (in which we show that the assumptions on $P_0$
are satisfied and derive the weights $x^{5/2 + \Gamma/2}$), and \S\ref{s:wz}, resp.\ \S\ref{s:nz}, resp.\ \S\ref{s:ps} in the cases
(1), resp.\ (2), resp.\ (3) (in which we show that the assumptions on $P_1$ are satisfied).

\begin{rem}
The same results also hold when $(X,g)$ has Euclidean ends in the sense of \S\ref{s:aeinfinity}; one merely needs to use the results of \S\ref{s:aeinfinity} instead of
those of \S\ref{s:ahinfinity}. In this case the (mild) difference with previous authors is that we obtain the analytic continuation and the resolvent estimates for the resolvent with exponential weights (as in \S\ref{s:aeinfinity}) rather than with compactly supported cutoff functions. Note however that in \cite{bp} Bruneau-Petkov give a method for passing from cutoff resolvent estimates to weighted resolvent estimates in such a situation.
\end{rem}

In the special case where $V \equiv 0$ and
\[H = \Delta_g  - \frac{(n-1)^2}4,\]
we can obtain a resonant wave expansion as a corollary. Indeed, we
we have the resolvent estimate
\begin{equation}\label{e:resestwave}
\|x^{5/2+\Gamma/2} R(z)x^{5/2 + \Gamma/2}\|_{L^2 \to L^2} \le C|z|^{N-2}, \qquad |\re z|>z_0, \quad \im z > -|\Gamma|/2,
\end{equation}
where $R(z)$ is now $(H - z^2)^{-1}$ for $\im z > 0$ or its meromorphic continuation for $\im z <0$. This follows from the substitution
\[h^2z^2 = 1 + \lambda, \qquad \re z = h^{-1}.\]
On the other hand, work of Mazzeo-Melrose \cite{mm} and Guillarmou \cite{g} (see also \cite{v1,v2}) shows that the  weighted resolvent $x^{5/2+\Gamma/2} R(z)x^{5/2 + \Gamma/2}$ continues meromorphically to $\{\im z > -|\Gamma|/2\}$. From this the following resonant wave expansion follows.

\begin{cor} Suppose $u$ solves
\begin{equation}\label{e:wave}
(\D_t^2 + H)u = 0, \qquad u|_{t=0}=f, \quad\D_t u|_{t=0}=g
\end{equation}
for $f,g \in C_0^\infty(X)$, with support disjoint from the convex obstacles in the case (3) above (and with no restriction on the support in cases (1) and (2)). Then
\begin{equation}\label{e:resexpand}
u(t) =  \sum_{\im z_j > -\Gamma/2}\sum_{m=0}^{M(z_j)} e^{-itz_j}t^m w_{z,j,m} + E(t,x).
\end{equation}
The sum is taken over poles of $R(z)$, $M(z_j)$ is the algebraic multiplicity of the pole at $z_j$, and the $w_{z,j,m} \in C^\infty(X)$ are eigenstates or resonant states. The error term obeys the estimate
\[|\D^\alpha E(t,x)| \le C_{\alpha,\epsilon} e^{-t(\Gamma/2 - \epsilon)}\]
for every $\epsilon > 0$ and multiindex $\alpha$, uniformly over compact subsets of $X$.
\end{cor}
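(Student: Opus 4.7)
The strategy is the standard one for turning weighted resolvent estimates into resonance expansions: represent $u(t)$ as a contour integral of $R(z)$ applied to the initial data, deform the contour past the real axis into $\{\im z>-\Gamma/2\}$ where $x^{5/2+\Gamma/2}R(z)x^{5/2+\Gamma/2}$ continues meromorphically, pick up residues at the resonances $z_j$, and bound the remaining integral on the shifted contour by $e^{-t(\Gamma/2-\epsilon)}$ times finite constants.

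\textbf{Contour representation.} The functional calculus for the non-negative self-adjoint operator $H$ yields, for $t>0$ and data $(f,g)\in C_0^\infty(X)\times C_0^\infty(X)$, a representation of the form
\[
u(t)=\frac{1}{2\pi i}\int_{\im z=C}e^{-itz}\bigl(c_1 z R(z)f+c_2 R(z)g\bigr)\,dz,\qquad C>0 \text{ large},
\]
for explicit constants $c_1,c_2$. This is verified by spectrally decomposing both sides in $H$ and reducing to the scalar identities $\cos(t\sqrt{\lambda})=\frac{1}{\pi i}\int_{\im z=C}\frac{ze^{-itz}}{z^2-\lambda}\,dz$ and its analogue for $\sin(t\sqrt{\lambda})/\sqrt{\lambda}$, which hold for $\lambda\geq 0$.

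\textbf{Contour shift and residues.} Since $f,g$ are compactly supported, $x^{-(5/2+\Gamma/2)}f$ and $x^{-(5/2+\Gamma/2)}g$ lie in $L^2$. Combining this with \eqref{e:resestwave} and the meromorphic continuation from \cite{mm,g} (see also \cite{v1,v2}), the integrand extends meromorphically to the strip $-\Gamma/2+\epsilon<\im z<C$. Shifting the contour to $\im z=-\Gamma/2+\epsilon$ and applying the residue theorem produces, at each pole $z_j$ in the strip, the residue of $e^{-itz}(c_1 zR(z)f+c_2 R(z)g)$. Expanding $e^{-itz}$ in Taylor series about $z_j$ and combining with the Laurent expansion of $R(z)$ of order $M(z_j)+1$ yields precisely the sum $\sum_{m=0}^{M(z_j)}e^{-itz_j}t^m w_{z,j,m}$ appearing in \eqref{e:resexpand}, with each $w_{z,j,m}\in C^\infty(X)$ equal to a Laurent coefficient applied to $f$ or $g$.

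\textbf{Error estimate (the main obstacle) and derivative bounds.} On the shifted contour the factor $|e^{-itz}|=e^{-t(\Gamma/2-\epsilon)}$ supplies the desired exponential decay, but the polynomial growth $|z|^{N-2}$ in \eqref{e:resestwave} would render the $z$-integral divergent; \emph{this is the main obstacle.} It is overcome by exploiting smoothness of the data: iterate the identity $R(z)f=-z^{-2}f-z^{-2}R(z)Hf$ to obtain
\[
R(z)f=-\sum_{k=0}^{K-1}z^{-2(k+1)}H^k f+z^{-2K}R(z)H^K f,
\]
and similarly for $g$. The explicit $z^{-2(k+1)}H^k f$ pieces are meromorphic on $\C\setminus\{0\}$ and decay at infinity in any half-plane, so by Jordan's lemma they contribute nothing to the shifted contour integral (a small indentation around $z=0$, if needed, produces a $t$-independent term that is absorbed into the finitely many resonance contributions). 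The remainder has size $|z|^{N-2-2K}\|x^{5/2+\Gamma/2}H^K f\|_{L^2}$ in the weighted norm, which is integrable for $K$ large enough, giving $\|\chi E(t)\|_{L^2}\lesssim e^{-t(\Gamma/2-\epsilon)}$ for any $\chi\in C_0^\infty(X)$. Derivative bounds $|\partial^\alpha E(t,x)|\leq C_{\alpha,\epsilon}e^{-t(\Gamma/2-\epsilon)}$ follow by upgrading \eqref{e:resestwave} to weighted semiclassical Sobolev estimates of arbitrary order via commutators with $H$ (compare the argument around \eqref{e:0ellipest}) and then invoking Sobolev embedding uniformly on compact subsets of $X$.
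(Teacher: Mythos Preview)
Your proof is correct and follows essentially the same contour-deformation strategy as the paper's sketch. The only cosmetic differences are that the paper packages the smoothness-for-decay trade as an upgraded operator estimate $\|x^{5/2+\Gamma/2}R(z)x^{5/2+\Gamma/2}\|_{H^{s+N}\to H^s}\le C|z|^{-2}$ (derived by the commutator/elliptic procedure of \S\ref{s:ahinfinity}) rather than iterating $R(z)f=-z^{-2}f+z^{-2}R(z)Hf$ directly on the data, and handles the $f$-term by differentiating the wave equation in $t$ rather than carrying the $zR(z)f$ piece through the integral; both variants amount to the same computation.
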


Note that the sum in \eqref{e:resexpand} is finite, thanks to the resonance free strip established by the estimate \eqref{e:resestwave}. This is a standard consequence of the resolvent estimate and the meromorphic continuation, by taking a Fourier transform in time and then performing a contour deformation. See for example \cite[\S 6.3]{d2} for a similar result, and \cite[\S 4]{msv2} for a similar result with an asymptotic extending to infinity in space. We sketch the proof here: see \cite[\S 6.3]{d2} for more details.  When $f \equiv 0$, we can write
\[
u(t) = \frac 1 {2\pi} \int_{-\infty + i K}^{\infty + i K} e^{-iz t}R(z) g \,dz,
\]
where $K > (n-1)/2$. The proof then proceeds by contour deformation from $\{\im z = K\}$ to $\{\im z = -\Gamma/2\}$. The residues at the poles of the resolvent produce the terms of the expansion in \eqref{e:resexpand}, and the resolvent estimate
\begin{equation}\label{e:resestwave2}
\|x^{5/2+\Gamma/2} R(z)x^{5/2 + \Gamma/2}\|_{H^{s+N} \to H^s} \le C|z|^{-2},
\end{equation}
for any $s$, justifies the deformation and controls the $H^s$ norm of the error (on compact sets, or in suitably weighted spaces) in terms of the $H^{s+N}$ norm of $g$. The estimate \eqref{e:resestwave2} can be derived from the $L^2 \to L^2$ estimate \eqref{e:resestwave} following the same procedure as in \S\ref{s:ahinfinity} above. The case where $f \not \equiv 0$ and $g \equiv 0$ can be deduced similarly by differentiating the equation \eqref{e:wave} in $t$, and then the general case follows by superposition of these two cases.

In many settings better resolvent estimates are available in the physical half plane $\im \lambda > 0$. More specifically, we obtain the following theorem (see \cite[(1.1)]{wz} and \cite[(1.17)]{Nonnenmacher-Zworski:Quantum} for the corresponding resolvent estimates for the trapping model operators).

\begin{thm}\label{t:logloss} Let $(X,g)$ be even and asymptotically hyperbolic, let $V \in C_0^\infty(X;\R)$, and let
\[
P = h^2 \Delta_g + V - 1.
\]
 Suppose $P$ has a normally hyperbolic trapped set in the sense of \S\ref{s:wz} or a hyperbolic trapped set with $\mathcal{P}(1/2)<0$ as in \S\ref{s:nz}. Then for any $\chi \in C_0^\infty(X)$ there exist $E,h_0,C >0$ such that
	\[\|\chi R(\lambda)\chi\|_{L^2 \to L^2} \le C \log(1/h)h^{-1}\]
	for $\lambda \in [-E,E]+i[0,\infty)$, $0 < h \le h_0$.
\end{thm}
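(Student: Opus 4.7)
The plan is to apply the main gluing theorem, Theorem~\ref{t:main}, with both input bounds taken in the upper half plane rather than a strip below the real axis. The key observation is that the proof of Theorem~\ref{t:main} — the parametrix \eqref{e:fdef}, the double error~\eqref{e:a2}, the triple error \eqref{e:a0a1} via Lemmas~\ref{zigzag:worse} and \ref{zigzag}, and the invertibility of $\Id - A_0 A_1 + A_0 A_1 A_0$ — uses only the polynomial boundedness of $\rho_j R_j(\lambda)\rho_j$, the semiclassically outgoing property, and the convexity condition \eqref{eq:convexity}. Nothing in the argument requires $\im\lambda\leq 0$, so the conclusion $\|\rho R(\lambda)\rho\|\leq C h^2 a_0^2 a_1$ holds verbatim for any $\lambda$ in a set $D$ on which the hypotheses hold, including $D=[-E,E]+i[0,\infty)$.

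With this in hand, I would plug in the sharpened model estimates. For the infinity model, the asymptotically hyperbolic resolvent estimate \eqref{e:vasyresest} proved in \S\ref{s:ahinfinity} is valid for $\im\sigma>-\Gamma/2$, equivalently $\im\lambda>-\Gamma$, and in particular on the entire closed upper half plane; it gives $a_0=C/h$. The outgoing property (0-OG) likewise holds on this region, by the semiclassical FIO structure of $R_0(\lambda)$ away from the diagonal (third paragraph of \S\ref{s:infinity}). For the trapping model, the physical-side estimates of Wunsch--Zworski \cite[(1.1)]{wz} in the normally hyperbolic case, and of Nonnenmacher--Zworski \cite[(1.17)]{Nonnenmacher-Zworski:Quantum} in the $\mathcal{P}(1/2)<0$ case, give $\|\chi R_1(\lambda)\chi\|_{L^2\to L^2}\leq C\log(1/h)/h$ for $\lambda\in[-E,E]+i[0,\infty)$, so $a_1=C\log(1/h)/h$. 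The outgoing assumption (1-OG) was already verified in \S\ref{s:cpx} (normally hyperbolic case, with the complex absorbing barrier) and in \S\ref{s:euc} (Euclidean-end normalization, negative pressure case); both arguments are microlocal and unaffected by the sign of $\im\lambda$.

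Combining these in Theorem~\ref{t:main} gives
\[
\|\rho R(\lambda)\rho\|_{L^2\to L^2}\leq C h^2 a_0^2\,a_1 \leq C h^2\cdot h^{-2}\cdot \log(1/h)\,h^{-1} = C\log(1/h)\,h^{-1},
\]
uniformly in $\lambda\in[-E,E]+i[0,\infty)$ and $0<h\leq h_0$. To pass from $\rho$ to a compactly supported cutoff $\chi\in C_0^\infty(X)$, I use that $\rho=\rho_0$ on $X_0$ is bounded below on $\supp\chi\cap X_0$ (since $\supp\chi$ is at a positive distance from $\pa\overline X$), and $\rho=1$ on $X\setminus X_0$, so $\chi\leq C\rho$ pointwise and hence $\|\chi R(\lambda)\chi\|\leq C^2\|\rho R(\lambda)\rho\|$, yielding the stated bound.

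The main point requiring attention is that Theorem~\ref{t:main} was formally stated with $D\subset[-E,E]-i[0,\Gamma h]$; I would either re-examine its proof to check (easily) that the same argument works for any $D$ on which the model hypotheses hold, or equivalently combine the lower-half-plane bound (already proved) with the Phragmén--Lindelöf/maximum principle type reasoning on the boundary $\im\lambda=0$, where the limiting absorption principle from both sides coincides. The upper half plane application is in fact the simpler case, because the full resolvent $R(\lambda)$ is already known to be holomorphic and $L^2$-bounded for $\im\lambda>0$; the content of the theorem is the uniform $\log(1/h)h^{-1}$ bound down to the real axis, which is inherited directly from the corresponding bound on the trapping model.
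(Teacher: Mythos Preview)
Your proposal is correct and is precisely the approach the paper takes: it states Theorem~\ref{t:logloss} with only a parenthetical reference to the sharper model bounds \cite[(1.1)]{wz} and \cite[(1.17)]{Nonnenmacher-Zworski:Quantum}, the implicit argument being exactly the application of Theorem~\ref{t:main} with $a_0=C/h$ and $a_1=C\log(1/h)/h$ on $D=[-E,E]+i[0,\infty)$ that you have spelled out. Your observation that the proof of Theorem~\ref{t:main} nowhere uses $\im\lambda\le 0$ (and that the positive-commutator argument in Lemma~\ref{l:w} only improves when $\im\lambda\ge 0$) is the one point the paper leaves to the reader, and you have handled it correctly.
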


In \cite{Bony-Burq-Ramond}, Bony-Burq-Ramond prove that for $P$ a semiclassical Schr\"odinger operator on $\R^n$, the presence of a single trapped trajectory implies that
\[ \log(1/h)h^{-1}\le C\sup_{\lambda \in [-\eps,\eps]} \|\chi R(\lambda)\chi\|,\]
provided $\chi \in C_0^\infty(X)$ is $1$ on the projection of the trapped set. Consequently, in that setting (and probably in general), Theorem~\ref{t:logloss} is optimal.

From Theorem~\ref{t:logloss} it follows by a standard $TT^*$ argument as in \cite[\S6]{d} that the Schr\"odinger propagator exhibits local smoothing with loss:
\[\int_0^T \|\chi e^{-it\Delta_g}u\|^2_{H^{1/2 - \eps}}dt \le C_{T,\eps}\|u\|^2_{L^2},\]
for any $T,\eps > 0$. In fact, the main resolvent estimate of \cite{d} follows from Theorem \ref{t:main} above, because the model operator near infinity, $P_0$ can be taken to be a nontrapping scattering Schr\"odinger operator, for which the necessary resolvent and propagation estimates were proved in \cite{vz}. Moreover, Burq-Guillarmou-Hassell \cite{bgh} show that when $\mathcal{P}(1/2) < 0$ semiclassical resolvent estimates with logarithmic loss can be used to deduce Strichartz estimates with no loss on a scattering manifold (a manifold with asymptotically Euclidean or asymptotically conic ends in a sense which generalizes that of \S\ref{s:aeinfinity}), and the same result probably holds on the asymptotically hyperbolic spaces considered here. See also \cite{bgh} for more references and a discussion of the history and of recent developments in local smoothing and Strichartz estimates.

Another possible application of the method is to give alternate proofs of cutoff resolvent estimates in the presence of trapping, where the support of the cutoff is disjoint from the trapping. As mentioned in the introduction, estimates of this type were proved by Burq \cite{Burq:Lower} and Cardoso and Vodev \cite{Cardoso-Vodev:Uniform} and take the form
\[\|\chi R(\lambda)\chi\|_{L^2 \to L^2} \le Ch^{-1},\]
for $\im \lambda > 0$, where $\chi \in C^\infty(X)$ vanishes on the convex hull of the trapped set and is either compactly supported or suitably decaying near infinity. Indeed a related method based on propagation of singularities was used in \cite{dv2} to prove such a result.


\begin{thebibliography}{00}
\bibitem[BBR10]{Bony-Burq-Ramond} Jean-Fran\c cois Bony, Nicolas Burq and Thierry Ramond. Minoration de la r\'esolvante dans le cas captif. [Lower bound on the resolvent for trapped situations]. \textit{C. R. Math. Acad. Sci. Paris.} 348(23-24):1279--1282, 2010.

\bibitem[BrPe00]{bp} Vincent Bruneau and Vesselin Petkov. Semiclassical Resolvent Estimates for Trapping Perturbations. \textit{Comm. Math. Phys.}, 213(2):413--432, 2000.

\bibitem[Bur02]{Burq:Lower}
Nicolas Burq.
\newblock Lower bounds for shape resonances widths of long range
  {S}chr\"odinger operators.
\newblock {\em Amer. J. Math.}, 124(4):677--735, 2002.

\bibitem[BGH10]{bgh} Nicolas Burq, Colin Guillarmou and Andrew Hassell. Strichartz estimates without loss on manifolds with hyperbolic trapped geodesics. \textit{Geom. Funct. Anal.}, 20(3):627--656, 2010.

\bibitem[CaVo02]{Cardoso-Vodev:Uniform}
Fernando Cardoso and Georgi Vodev.
\newblock Uniform estimates of the resolvent of the {L}aplace-{B}eltrami
  operator on infinite volume {R}iemannian manifolds. {II}.
\newblock {\em Ann. Henri Poincar\'e}, 3(4):673--691, 2002.

\bibitem[Chr07]{c07} Hans Christianson. Semiclassical Non-concentration near Hyperbolic Orbits. \textit{J. Funct. Anal.} 246(2):145--195, 2007.

\bibitem[Chr08]{c08}Hans Christianson. Dispersive Estimates for Manifolds with one Trapped Orbit. \textit{Comm. Partial Differential Equations}, 33(7):1147--1174, 2008.

\bibitem[Dat09]{d} Kiril Datchev. Local smoothing for scattering manifolds with hyperbolic trapped sets. \textit{Comm. Math. Phys.} 286(3):837--850, 2009.

\bibitem[Dat10]{d2} Kiril Datchev. Distribution of resonances for manifolds with hyperbolic ends. PhD thesis, U.C. Berkeley, available at \url{http://math.mit.edu/~datchev/main.pdf}, 2010.

\bibitem[DaVa10]{dv2} Kiril Datchev and Andr\'as Vasy. Propagation through trapped sets and semiclassical resolvent estimates. To appear in \textit{Ann. Inst. Fourier}. Preprint available at arXiv:1010.2190, 2010.

\bibitem[DiSj99]{ds} Mouez Dimassi and Johannes Sj\"ostrand. Spectral asymptotics in the semiclassical limit. \textit{London Math. Soc. Lecture Note Ser.} 268, 1999.

\bibitem[EvZw11]{ez} Lawrence C. Evans and Maciej Zworski. Semiclassical analysis. To appear in \textit{Grad. Stud. Math.}, AMS. Preprint available online at \url{http://math.berkeley.edu/~zworski/semiclassical.pdf}.

\bibitem[GrUh90]{gu} Allan Greenleaf and Gunther Uhlmann. Estimates for singular radon transforms and pseudodifferential operators with singular symbols. \textit{J. Func. Anal.}  89(1):202--232, 1990.

\bibitem[Gui05]{g} Colin Guillarmou. Meromorphic properties of the resolvent on asymptotically hyperbolic manifolds. \textit{Duke Math. J.} 129(1):1--37, 2005.

\bibitem[H\"or71]{h} Lars H\"ormander. On the existence and the regularity of solutions of linear pseudo-differential equations. \textit{Enseign. Math.} 17:99--163, 1971.

\bibitem[Ika88]{i} Mitsuru Ikawa. Decay of solutions of the wave equation in the exterior of several convex obstacles. \textit{Ann. Inst. Fourier (Grenoble)} 38(2):113--146, 1988.

\bibitem[JoS\'a00]{js} Mark S. Joshi and Ant\^onio S\'a Barreto. Inverse scattering on asymptotically hyperbolic manifolds. \textit{Acta Math.} 184:1, 41--86, 2000.

\bibitem[KaHa95]{kh} Anatole Katok and Boris Hasselblatt. Introduction to the Modern Theory of Dynamical Systems. \textit{Cambridge University Press} 1995.

\bibitem[MaMe87]{mm} Rafe R. Mazzeo and Richard B. Melrose. Meromorphic extension of the resolvent on complete spaces with asymptotically constant negative curvature. \textit{J. Func. Anal.} 75:2 260--310, 1987.

\bibitem[Mel95]{mel} Richard B. Melrose. Geometric Scattering Theory. Stanford Lectures. Cambridge University Press, 1995.

\bibitem[MSV08]{msv2} Richard B. Melrose, Ant\^onio S{\'a} Barreto, and Andr\'as Vasy.
\newblock Asymptotics of solutions of the wave equation on de Sitter-Schwarzschild space.
\newblock Preprint available at arXiv:0811.2229, 2008.

\bibitem[MSV11]{Melrose-SaBarreto-Vasy:Semiclassical}
Richard B. Melrose, Ant\^onio S{\'a} Barreto, and Andr\'as Vasy.
\newblock Analytic continuation and semiclassical resolvent estimates on
  asymptotically hyperbolic spaces.
\newblock Preprint available at arXiv:1103.3507, 2011.

\bibitem[NoZw09a]{Nonnenmacher-Zworski:Quantum}
St{\'e}phane Nonnenmacher and Maciej Zworski.
\newblock Quantum decay rates in chaotic scattering.
\newblock {\em Acta Math.}, 203(2):149--233, 2009.

\bibitem[NoZw09b]{nz2} St{\'e}phane Nonnenmacher and Maciej Zworski. Semiclassical resolvent estimates in chaotic scattering. \textit{Appl. Math. Res. Express. AMRX}, 2009(1):74--86, 2009. 

\bibitem[PeSt10]{ps} Vesselin Petkov and Luchezar Stoyanov. Analytic continuation of the resolvent of the Laplacian and the dynamical zeta function. \textit{Anal. PDE}. , 3(4):427--489, 2010.

\bibitem[ScYa94]{sy} Richard Schoen and Shing-Tung Yau. Lectures on differential geometry. \textit{International Press}, Cambridge, MA, 1994.

\bibitem[SjZw91]{szcomp} Johannes Sj\"ostrand and Maciej Zworski. Complex scaling and the distribution of scattering poles, \textit{J. Amer. Math. Soc.} 4(4):729--769, 1991.

\bibitem[SjZw07]{sz} Johannes Sj\"ostrand and Maciej Zworski. Fractal
  upper bounds on the density of semiclassical resonances,
  \textit{Duke Math. J.} 137(3):381--459, 2007.

\bibitem[Tay96]{taylor} Michael E.\ Taylor. Partial differential
  equations, v.1. \textit{Springer-Verlag}, New York, 1996.

\bibitem[Vas10]{v1} Andr\'as Vasy. Microlocal analysis of asymptotically hyperbolic and Kerr-de Sitter spaces. Preprint available at arXiv: 1012.4391, 2010.

\bibitem[Vas11]{v2} Andr\'as Vasy. Microlocal analysis of asymptotically hyperbolic spaces and high energy resolvent estimates. Preprint available at arXiv:1104.1376, 2011.

\bibitem[VaWu05]{vw} Andr\'as Vasy and Jared Wunsch. Absence of super-exponentially decaying eigenfunctions on Riemannian manifolds with pinched negative curvature. \textit{Math. Res. Lett.} 12(5--6):673--684, 2005.

\bibitem[VaZw00]{vz} Andr\'as Vasy and Maciej Zworski. Semiclassical estimates in asymptotically Euclidean scattering. \textit{Comm. Math. Phys.} 212(1):205--217, 2000.

\bibitem[WuZw00]{wz0} Jared Wunsch and Maciej Zworski. Distribution of resonances for asymptotically Euclidean manifolds. \textit{J. Differential Geom}. 55(1):43--82, 2000.

\bibitem[WuZw11]{wz} Jared Wunsch and Maciej Zworski. Resolvent estimates for normally hyperbolic trapped sets. \textit{Ann. Inst. Henri Poincar\'e (A).} 12(7):1349--1386, 2011.

\end{thebibliography}
\end{document}